\pdfoutput=1
\documentclass{article}
\usepackage[T1]{fontenc}
\usepackage{amsmath}
\usepackage{amsthm}
\usepackage{newtxtext}
\usepackage[varvw]{newtxmath}
\usepackage{graphicx}
\usepackage{booktabs}
\usepackage{algorithm}
\usepackage{algpseudocode}
\usepackage[hyphens]{url}
\newcommand{\eb}{\overline}
\DeclareMathOperator{\ee}{e}
\makeatletter
\newcommand{\setcitename}[2]{\@namedef{citeauthors@#1}{#2}}
\newcommand{\citeauthors}[1]{\@ifundefined{citeauthors@#1}%
  {\@latex@warning{No author name for citation `#1'}\textbf{??}}%
  {\@nameuse{citeauthors@#1}}}
\makeatother
\newcommand{\citep}[1]{\cite{#1}}
\newcommand{\citet}[1]{\citeauthors{#1}~\cite{#1}}
\newcommand{\citealp}[1]{\citeauthors{#1}~\cite{#1}}
\setcitename{EG1999}{Erd\H{o}s and Gy\'arf\'as}
\setcitename{KirchwegerManriqueSzeider}{Kirchweger, Manrique, and Szeider}
\setcitename{LamThielSwiercz1989}{Lam, Thiel, and Swiercz}
\setcitename{McKay1998}{McKay}
\setcitename{Brouwer1981}{Brouwer}
\setcitename{KangPikhurko2005}{Kang and Pikhurko}
\setcitename{ChungLiu1978}{Chung and Liu}
\usepackage{listings}
\usepackage{xcolor}
\lstdefinelanguage{lean4}{
  morekeywords={def,theorem,lemma,where,fun,by,open,namespace,end,variable,instance,
    noncomputable,Prop,Type},
  sensitive=true, morecomment=[l]{--}, morecomment=[s]{/-}{-/}, morestring=[b]",
}
\usepackage{worldflags}
\makeatletter
\g@addto@macro\@floatboxreset{\small}
\makeatother
\newtheorem{theorem}{Theorem}
\newtheorem{lemma}{Lemma}
\newtheorem{corollary}{Corollary}
\theoremstyle{remark}
\newtheorem{remark}{Remark}
\makeatletter
\newenvironment{credits}{\par\small\emergencystretch=1em
  \renewcommand\subsubsection[1]{\@startsection{subsubsection}{3}{\z@}%
    {-12\p@ \@plus -4\p@ \@minus -4\p@}%
    {-0.5em \@plus -0.22em \@minus -0.1em}%
    {\normalfont\small\bfseries}*{##1}}}{\par}
\makeatother
\usepackage{hyperref}
\begin{document}

\title{Erd\H{o}s--Gy\'arf\'as Balanced Five-Colorings}
\author{Hai Xia\qquad Stefan Szeider\\[0.75ex]
  {\normalsize Algorithms and Complexity Group, TU Wien, Vienna, Austria}\\
  {\normalsize\texttt{\{hxia,sz\}@ac.tuwien.ac.at}}}
\date{}
\maketitle

\begin{abstract}
A coloring of the edges of a complete graph with $r$ colors is balanced if the edges among any $r+1$ vertices use all $r$ colors. Erd\H{o}s and Gy\'arf\'as (1999) conjectured that for all $r\ge3$ the complete graph $K_{r^2+1}$ has no balanced $r$-coloring. They proved the conjecture for $r=3$ and $r=4$. We prove the conjecture for $r=5$, the first new case settled in over two decades. There is no balanced $5$-coloring of $K_{26}$, and since $K_{25}$ has one, $K_{25}$ is the largest complete graph admitting a balanced $5$-coloring. We design a reduce--dispatch--enumerate workflow, where every machine step produces an artifact that can be independently checked. First, we use a machine-checked (Lean~4) proof to reduce the conjecture to a single extremal statement about the color class with the fewest edges: no graph on $26$ vertices has between $55$ and $65$ edges while every $6$ of its vertices span between $1$ and $11$ edges. We then search on the sparse class instead of its dense complement, which makes enumeration up to isomorphism feasible. With classical stability theorems for $K_6$-free graphs, we dispatch the five near-extremal edge counts analytically. For the exhaustive enumeration on the remaining six edge counts, we design efficient encodings with SAT modulo symmetries and parallel cube-and-conquer. We also certify all eleven edge counts: each of the $12{,}296$ subproblems carries a clausal refutation checked by the independent verifier \texttt{drat-trim}, and all $273{,}503{,}938$ symmetry-breaking clauses are also validated against the vertex permutations that justify them.
\end{abstract}
\section{Introduction}\label{sec:intro}

For an integer $r\ge2$, a \emph{balanced $r$-coloring} of the complete graph $K_n$ on $n$ vertices is a map
$c\colon E(K_n)\to\{1,\dots,r\}$ from its edges to colors such that every set of $r+1$ vertices
\emph{spans} edges of all $r$ colors, that is, the edges with both ends in the set use all $r$ colors. Balanced colorings were introduced in 1999 by \citet{EG1999},
who also gave a reformulation: a balanced $r$-coloring of $K_n$ exists if and only if $E(K_n)$
can be partitioned into $r$ graphs $G_1,\dots,G_r$ with $\alpha(G_i)\le r$ for each $i$. Here a
set of vertices of a graph is \emph{independent} if it spans no edge, and the
\emph{independence number} $\alpha(G)$ of a graph $G$ is the largest size of an independent
set. In detail, an $r$-coloring and a partition of
$E(K_n)$ into $r$ graphs determine each other, with $G_i$ the graph on all $n$ vertices whose edges
are those of color~$i$. A set of $r+1$ vertices misses color~$i$, that is, none of the edges among them has
color~$i$, exactly when it is an independent set of~$G_i$. Hence the coloring is balanced
exactly when no $G_i$ has an independent set of $r+1$ vertices, which is the condition
$\alpha(G_i)\le r$ for every~$i$.

\citet{EG1999} conjectured a sharp threshold for the existence of balanced colorings.

\begin{quote}
\textbf{Conjecture (Erd\H{o}s--Gy\'arf\'as).} For every $r\ge3$ there is no balanced $r$-coloring of
$K_{r^2+1}$.
\end{quote}

The value $r^2+1$ is not an arbitrary threshold. When $r$ is a prime power (that is, $r=p^{\ell}$ for a
prime~$p$ and an integer $\ell\ge1$), the finite field with $r$ elements (for prime $r$, the integers
modulo~$r$) yields an affine plane of order~$r$, which gives a balanced $r$-coloring of $K_{r^2}$. The plane has $r^2$ points and lines of $r$ points each. Every two points lie on exactly one line, and the lines fall into
$r+1$ parallel classes of $r$ pairwise disjoint lines. Take the points as the vertices of $K_{r^2}$
and put each edge into the parallel class of the line through its two ends. This splits the edges
into $r+1$ classes, each consisting of $r$ disjoint cliques (sets of pairwise adjacent vertices) on $r$ vertices~\citep{EG1999}. An
independent set meets each clique in at most one vertex, so every class has independence number~$r$.
Merging two classes into one color leaves $r$ colors, each still with independence number at
most~$r$. Therefore, the coloring is balanced (Fig.~\ref{fig:examples}, right). The construction is called the \emph{affine coloring} of~$K_{r^2}$. The Erd\H{o}s--Gy\'arf\'as conjecture asserts that we cannot push past $r^2$. For $r=2$ it is false: coloring the five edges of a pentagon $C_5$ with one color and the other five
edges of $K_5$ with the second gives a balanced $2$-coloring of $K_5$ (Fig.~\ref{fig:examples}, left). For $r=3$ and $r=4$, \citet{EG1999} proved it: there is no balanced $3$-coloring of $K_{10}$ and no balanced $4$-coloring of $K_{17}$. Their two proofs are case-specific. The $r=3$ argument relies on Brooks' theorem~\citep{Brooks1941} and a short case analysis; this case had in fact been settled already in 1978 by \citet{ChungLiu1978} in the language of generalized Ramsey numbers (their Theorem~3.6): every $3$-coloring of $K_{10}$ contains a $K_4$ that uses at most two colors. The $r=4$
argument involves edge counting and the Ramsey number $R(3,4)=9$~\citep{GreenwoodGleason1955}. The case $r=5$, the existence of a balanced $5$-coloring of $K_{26}$, has remained open since 1999.

\begin{figure}[h]
\centering
\includegraphics{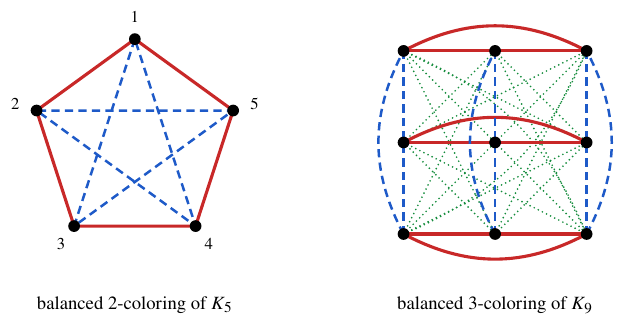}%
\caption{Two balanced colorings. Left: $K_5$ colored by the pentagon $C_5$ (solid) and its
complement (dashed); every $3$ vertices span both colors, which is why the conjecture is stated
for $r\ge3$. Right: the affine coloring of $K_9$. The vertices are the $9$ points of the
affine plane of order~$3$, drawn as a $3\times3$ grid, whose lines are the rows, the
columns, and two classes of diagonals that wrap around the grid. Edges inside a row get the
first color (solid), edges inside a column the second (dashed), and all remaining edges, which
form the two diagonal classes, the third (dotted). Any $4$ vertices include two in the same
row, two in the same column, and two in neither, so every $4$ vertices span all three colors.}
\label{fig:examples}
\end{figure}

In this paper, our main result settles Erd\H{o}s--Gy\'arf\'as balanced five-colorings as stated in Theorem~\ref{thm:main}.

\begin{theorem}\label{thm:main}
There is no balanced $5$-coloring of $K_{26}$. Equivalently, every $5$-coloring of the edges of $K_{26}$
contains a copy of $K_6$ that misses at least one color. The Erd\H{o}s--Gy\'arf\'as conjecture holds for
$r=5$.
\end{theorem}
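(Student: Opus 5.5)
We argue by contradiction: suppose $K_{26}$ has a balanced $5$-coloring, so $E(K_{26})$ splits into graphs $G_1,\dots,G_5$ with $\alpha(G_i)\le5$. The plan follows the reduce--dispatch--enumerate scheme of the abstract. \emph{Reduction.} Since $\binom{26}{2}=325$, some class, say $G_1$, has at most $65$ edges. Passing to complements, $\eb{G_1}$ is $K_6$-free, so by Tur\'an's theorem $\eb{G_1}$ has at most $t_5(26)=270$ edges. Hence $e(G_1)\ge 55$, and $e(G_1)\in[55,65]$. Next we look at any $6$-set $S$. Balancedness gives at least one edge of $G_1$ inside $S$, and also at least one edge of each of $G_2,\dots,G_5$. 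Since $S$ spans $15$ pairs, at most $11$ of them lie in $G_1$. So the reduction is: \emph{no graph on $26$ vertices with $55\le e\le65$ edges has every $6$-set spanning between $1$ and $11$ edges.} I would formalize exactly this implication in Lean~4. Only the counting is involved, together with Tur\'an's bound, which could instead be replaced by an averaging argument over $6$-sets.

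\emph{Dispatch of near-extremal counts.} For $e(G_1)$ near $55$, the complement $\eb{G_1}$ is a $K_6$-free graph with nearly $t_5(26)$ edges. By Erd\H{o}s--Simonovits stability, in the quantitative form of F\"uredi or Brouwer, such a graph is $5$-partite after deleting few edges. Brouwer's theorem says a $K_6$-free graph with more than $t_5(n)-\lfloor n/5\rfloor+1$ edges is $5$-partite. I would use this to pin down the structure of $G_1$: it contains five disjoint cliques covering the $26$ vertices with part sizes $5,5,5,5,6$, possibly with a few extra edges. Then the upper bound fails. A clique of size $6$ already spans $15>11$ edges. Alternatively, the parts must be unbalanced, and a $6$-set inside a large part spans too many edges. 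Each of the roughly five counts $55,\dots,59$ should close by such a case check, using the exact form of the stability theorem.

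\emph{Enumeration.} The remaining counts, roughly $60$ to $65$, I would settle by exhaustive search on the sparse graph $G_1$ rather than its dense complement. The encoding has Boolean edge variables, a cardinality constraint for $e$, and for every $6$-set a clause ``at least one edge'' plus an at-most-$11$ constraint. The number of $6$-sets is $\binom{26}{6}\approx 2.3\cdot10^5$. Symmetry is handled with SAT modulo symmetries, which enforces a lex-minimal adjacency matrix. The work is split by cube-and-conquer, for instance on the degree sequence, which the constraints force to be small and nearly regular. Each cube's UNSAT result is certified with DRAT proofs checked by \texttt{drat-trim}, and the symmetry-breaking clauses are checked against explicit permutations.

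The main obstacle is this enumeration. Even sparse $26$-vertex graphs have an astronomical search space, and the at-most-$11$ constraints are weak locally. I would first add derived constraints to prune the search. Every $6$-set containing an edge forces neighborhood restrictions, and $\alpha(G_1)\le5$ bounds the minimum degree from below via Ramsey-type arguments on non-neighborhoods. Then I would tune the cube splitting until every subproblem is tractable.
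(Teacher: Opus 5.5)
Your architecture is the paper's: the smallest class has $55\le e\le65$ (averaging plus Tur\'an on the complement), balancedness gives the bound of $11$, stability theorems handle the low end, and a certified SMS enumeration of the sparse class handles the rest. For $e(G_1)\le58$ your dispatch is correct. Brouwer's threshold is $t_5(26)-\lfloor26/5\rfloor+2=267$ edges of $\eb{G_1}$, and any $5$-partition of $26$ vertices has a part of size at least $6$, which is a $K_6$ in $G_1$.

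\textbf{The gap is $e(G_1)=59$.} Then $\eb{G_1}$ has $266<267$ edges, Brouwer says nothing, and the structure you claim is false. For instance, $H_0=C_5[1,1,4,4,1]\vee K_{5,5,5}$ is $K_6$-free, not $5$-partite, has $266$ edges and $\alpha(H_0)=5$. So its complement is a $59$-edge graph with $\alpha\le5$ and $\omega\le5$. It contains no five covering cliques and no $K_6$, so neither of your closing arguments applies.

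The paper closes this case with the Kang--Pikhurko theorem. There, $266$ is exactly the maximum number of edges of a non-$5$-partite $K_6$-free graph on $26$ vertices, and the extremal graphs are listed explicitly (fourteen candidates). A direct check shows each candidate has a $6$-set spanning at most $3$ edges of $H$. For the $(5,5,5,5,5)$ candidates the minimum is $1$, so the violated condition is genuinely the at-most-$11$ bound (a $6$-set with $14$ edges of $G_1$), not $\omega\le5$. The alternative repair is to move $e=59$ into your enumeration, which is cheap: the paper's SMS run at $e=59$ emits only seven graphs from two isomorphism classes.

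Three smaller points.
\begin{itemize}
\item Your aside that Tur\'an's bound could be replaced by averaging over $6$-sets is wrong. Double counting gives only $e\binom{24}{4}\ge\binom{26}{6}$, that is, $e\ge22$, so you need Tur\'an or Caro--Wei to reach $55$.
\item Imposing at-most-$11$ on all $\binom{26}{6}$ sets costs about $455\cdot230{,}230\approx10^8$ clauses per cell. The paper avoids this by enumerating only graphs with $\alpha\le5$, $\omega\le5$ and $e$ edges, and testing (L) on each emitted graph directly. For the certificate it then adds the $455$ at-most-$11$ clauses of one densest $6$-set per emitted graph. This is sound because each such clause is implied by (L).
\item Be cautious with structural cubes. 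The ones the paper tried (matchings, the edges at one vertex, SMS's default cubing) left $74$ to $100\%$ of a test family in a single cube, because lex-minimal graphs make most edge variables nearly constant. The paper therefore cubes on the $12$ edge variables whose frequency in a saved family is closest to $\tfrac12$.
\end{itemize}
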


\begin{corollary}\label{cor:threshold}
The maximum $n$ for which $K_n$ admits a balanced $5$-coloring is exactly $25=5^2$.
\end{corollary}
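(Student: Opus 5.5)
The corollary has two halves: a lower bound showing $K_{25}$ admits a balanced $5$-coloring, and an upper bound showing no $K_n$ with $n\ge 26$ does. I would prove them separately.

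For the lower bound I take the affine coloring from the introduction with $r=5$. Since $5$ is prime, the integers modulo~$5$ give the affine plane of order~$5$. It has $25$ points, and its lines fall into $6$ parallel classes, each consisting of $5$ disjoint lines of $5$ points. Color each edge of $K_{25}$ by the parallel class of the line through its two ends, and then merge two of the six classes into one color, leaving $5$ colors. Each unmerged class is a disjoint union of five copies of $K_5$, so an independent set meets each clique at most once and the class has independence number $5$. The merged class contains such a class, so its independence number is also at most $5$. By the Erd\H{o}s--Gy\'arf\'as reformulation, the coloring is balanced. Concretely, every $6$ vertices must contain two vertices from the same line of each class, so all colors appear.

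For the upper bound I first note that being balanced is inherited by induced subcolorings on at least $r+1$ vertices. The condition is that every $6$-set spans all $5$ colors. Restricting a coloring of $K_n$ to the edges among any $26$ of its vertices keeps this property for every $6$-subset of those $26$ vertices. So a balanced $5$-coloring of $K_n$ with $n\ge26$ would restrict to a balanced $5$-coloring of $K_{26}$. Theorem~\ref{thm:main} says no such coloring exists. Hence the maximum $n$ is at most $25$, and with the construction above it equals $25=5^2$.

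No step here is a real obstacle: all the difficulty sits in Theorem~\ref{thm:main}, which I am allowed to use. The only point needing care is checking that the merged color class still has independence number at most $5$. This holds because it contains a single parallel class as a spanning subgraph, and adding edges cannot increase the independence number.
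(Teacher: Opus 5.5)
Your proposal is correct and follows the paper's proof: restricting a balanced $5$-coloring of $K_n$ with $n\ge 26$ to any $26$ vertices contradicts Theorem~\ref{thm:main}, and the affine coloring of $K_{25}$ attains the bound. Your pigeonhole check, that every $6$-set has two points on a common line of each of the $6$ parallel classes, verifies balance explicitly, which the paper instead does in the introduction.
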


\begin{proof}
Restricting a balanced $5$-coloring of $K_n$ to any $26$ of its vertices would give a balanced
$5$-coloring of $K_{26}$, since every set of $6$ chosen vertices still spans all $5$ colors.
Theorem~\ref{thm:main} therefore rules out every $n\ge26$. The affine coloring of $K_{25}$ described
above attains $25$.
\end{proof}

A balanced
$5$-coloring of $K_{26}$ can be written as one propositional formula: the Boolean variables indicate the colors of the edges, and the clauses require that each edge has exactly one color and that every set of $6$ vertices contains an edge of each color. However, such formulas are not solved in practice by state-of-the-art satisfiability (SAT) or
constraint solving. Even the simpler formula for the \emph{known} case $r=4$ on $K_{17}$ was not
decided in our tests within $3600$ seconds by the SAT solver kissat~\citep{BiereKissat2020}, by the
constraint solver CP-SAT \citep{PerronDidierGay2023}, or by CP-SAT with added cardinality constraints.
The obstacle is the huge search space with its symmetries: relabeling the $17$ vertices and permuting the $4$ colors turns every coloring into up to $17!\cdot4!$ equivalent copies (the group $S_{17}\times S_4$, where $S_m$ permutes $m$ objects). To overcome this obstacle, we design a reduce--dispatch--enumerate workflow: a reduction, an analytic dispatch of the near-extremal cases, and a computation. Furthermore, we design layers of certification to ensure that our results are trustworthy.

\paragraph{The reduction.}
We write $\ee(G)$ for the number of edges of a graph $G$. The smallest color class $G_0$ of a balanced
$5$-coloring of $K_{26}$ has $\ee(G_0)\le325/5=65$, as the five classes partition the $325$ edges. On the other hand, with
$\alpha(G_0)\le5$ and the Caro--Wei inequality (a lower bound on the independence number in terms of the
vertex degrees, see Section~\ref{sec:reduction}), we have $\ee(G_0)\ge55$. These two bounds leave the $11$ edge
counts $e\in[55,65]$. Furthermore, the definition of \emph{balance} also forces a local-density property (L), and the conjecture can be further reduced to the following statement, by a proof formally verified in the proof assistant Lean~4 (Section~\ref{sec:reduction}):

\begin{quote}
\textbf{Main Lemma.} No graph on $26$ vertices has independence number at most $5$, the local-density
property \textup{(L)} that every $6$ vertices span at most $11$ edges, and between $55$ and $65$ edges.
\end{quote}
We settle the five edge counts $e\in[55,59]$ analytically with two classical stability theorems~\citep{Brouwer1981,KangPikhurko2005}, as the complement of $G_0$ then has at least $266$ of the at most $270$ edges of a $K_6$-free graph on $26$ vertices (Section~\ref{sec:analytic}).

\paragraph{The computation.}
The remaining $6$ edge counts, $e\in[60,65]$, are settled by an exhaustive enumeration of the graphs for
the smallest class $G_0$ rather than for its dense complement. We enumerate the graphs with \emph{SAT modulo symmetries} (SMS)~\citep{KirchwegerSzeider}, using encodings designed for it, where a SAT solver prunes graphs that are not the lexicographically smallest copy in their isomorphism class (Section~\ref{sec:prelim}). We further parallelize the search by cube-and-conquer, choosing the variables to fix by their frequency (Section~\ref{sec:cubevars}), so that the computational load is spread evenly over independent subproblems (\emph{cells}). For $e\in\{63,64,65\}$, we split each formula into $4{,}096$ cubes, and we obtain refutation proofs for all subproblems. To make the computation trustworthy, we not only checked the proofs with an independent checker, but also validated all symmetry-breaking clauses added during the search against their \emph{permutation witnesses}, the
relabelings of the vertices that justify them (Section~\ref{sec:cert}). Fig.~\ref{fig:roadmap} summarizes the workflow.

\begin{figure}[h]
\centering
\includegraphics[width=0.8\columnwidth]{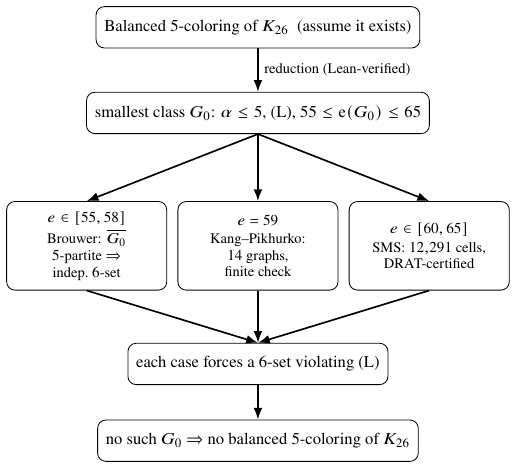}
\caption{The proof at a glance. A balanced $5$-coloring of $K_{26}$ would yield a smallest color class
$G_0$ with $\alpha\le5$, the local-density property (L), and $55\le\ee(G_0)\le65$ (Lemma~\ref{lem:band}), which the Main Lemma excludes. Two
classical stability theorems dispatch the near-extremal counts $e\in[55,59]$ analytically (a graph is $5$-partite if its
vertices split into $5$ independent sets), and a
\texttt{drat-trim}-certified SAT-modulo-symmetries enumeration settles $e\in[60,65]$. Each of the three cases
forces a $6$-set violating (L), so $G_0$ cannot exist.}
\label{fig:roadmap}
\end{figure}

\paragraph{Contributions.}
(i)~\emph{Result.} We settle the case $r=5$ of the Erd\H{o}s--Gy\'arf\'as conjecture, the first new
case since 1999 (Theorem~\ref{thm:main}, Corollary~\ref{cor:threshold}).
(ii)~\emph{A verified reduction chain.} A reduction checked in Lean~4 turns the conjecture into the
Main Lemma, a statement about one sparse graph with $11$ possible edge counts. The formal main theorem
has the Main Lemma as its only hypothesis.
(iii)~\emph{Efficient encodings and cube selection.} We encode the sparse class, impose the
local-density property only on one densest $6$-set per enumerated graph, and choose the $12$ cube
variables by their frequency, so that the heaviest cube holds $8.3\%$ of a test family, against $74$ to
$100\%$ for structural choices and SMS's default cubing (Section~\ref{sec:computation}).
(iv)~\emph{Layers of certification.} Programs separate from the search check the encodings, the
completeness of the enumeration, the refutations of all $12{,}296$ cells (with \texttt{drat-trim}), and
all $273{,}503{,}938$ symmetry-breaking clauses against their permutation witnesses. As the certified
search covers all $11$ edge counts, the result depends on neither external theorem (Section~\ref{sec:cert}, Table~\ref{tab:trust}).

\section{Preliminaries}\label{sec:prelim}

All graphs in our paper are finite and simple (no loops, no multiple edges between vertices). For a graph $G$, we denote its number of edges and independence number with $\ee(G)$ and
$\alpha(G)$, respectively.

The \emph{clique number} $\omega(G)$ is the largest number of pairwise adjacent vertices. A graph is \emph{$K_t$-free} if it has no $t$ pairwise adjacent vertices. Therefore, $K_6$-free is equivalent to $\omega(G)\le5$.

The \emph{chromatic number} $\chi(G)$ is the smallest number of colors in a coloring of the vertices such that adjacent vertices receive different colors. A graph is \emph{$r$-partite} when $\chi(G)\le r$. That is, the vertices of an $r$-partite graph can be split into $r$ independent sets, its \emph{parts}.

The \emph{complement} graph of $G$, $\eb G$, has the same
vertices as $G$ and contains exactly the pairs that are not edges of $G$. Then we have $\omega(\eb G)=\alpha(G)$ and $\alpha(\eb G)=\omega(G)$. Throughout our paper for $K_{26}$, we have the number of vertices, $N=26$ and the number of edges, $\binom{N}{2}=325$. For integers $a\le b$, $[a,b]$ denotes the range,
$\{a,a+1,\dots,b\}$. When the graph is clear from context, we write $\alpha\le5$ for $\alpha(G)\le5$, and similarly for $\omega$ and $\ee$.

A graph is \emph{complete $r$-partite} if its vertices split into $r$ parts with no edge inside a part and an edge between every two vertices in different parts. $K_{n_1,\dots,n_r}$ denotes the one with parts of sizes $n_1,\dots,n_r$. The Tur\'an graph $T(n,r)$ is the complete $r$-partite graph on
$n$ vertices with parts as equal as possible. For our instance, $n=26$ and $r=5$, the corresponding parts have sizes $(6,5,5,5,5)$, so $t_5(26):=\ee(T(26,5))=\ee(K_{6,5,5,5,5})=270$. By Tur\'an's theorem~\citep{Turan1941}, a $K_6$-free graph on $26$ vertices has at most $270$ edges, with equality only for $T(26,5)$. The following two stability theorems describe the $K_6$-free graphs just below this bound.

\begin{theorem}[\citealp{Brouwer1981}]\label{thm:brouwer}
Every $K_6$-free graph on $26$ vertices with at least $t_5(26)-\lfloor 26/5\rfloor+2=267$ edges is
$5$-partite.
\end{theorem}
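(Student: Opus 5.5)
The plan is to prove the general statement behind Theorem~\ref{thm:brouwer} by induction on $n$, with $r=5$ fixed: every $K_6$-free graph $G$ on $n$ vertices with $\ee(G)\ge t_5(n)-\lfloor n/5\rfloor+2$ is $5$-partite. The case $n=26$ is the instance we need. For small $n$ (say $n\le 5$) the statement is trivial, since every graph on at most $5$ vertices is $5$-partite. Throughout I write the \emph{deficit} of $G$ as $t_5(n)-\ee(G)$; the hypothesis says the deficit is at most $\lfloor n/5\rfloor-2$, which equals $3$ for $n=26$.

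\emph{Step 1 (a low-degree vertex).} Suppose $G$ is a counterexample on $n$ vertices. Let $v$ be a vertex of minimum degree $\delta$. By Tur\'an's theorem $\ee(G)\le t_5(n)$, and degree counting shows that $\delta$ is at most the minimum degree $n-\lceil n/5\rceil$ of $T(n,5)$. For $n=26$: $\delta\ge21$ would give $\ee\ge 273>270$, so $\delta\le 20$.

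\emph{Step 2 (apply induction to $G-v$).} Next use the identity $t_5(n)-t_5(n-1)=n-\lceil n/5\rceil$. With it, $G-v$ has deficit at most the deficit of $G$ plus $(n-\lceil n/5\rceil-\delta)$. This must be at most $\lfloor (n-1)/5\rfloor-2$ for the induction to apply; for $n=26$ it means $\ee(G-v)\ge 267-20=247=t_5(25)-5+2$. When $\delta$ equals the Tur\'an minimum degree this works directly. When $\delta$ is smaller, $\ee(G)$ large forces other vertices to have larger degree. In that case I would rather delete a vertex whose degree is exactly $n-\lceil n/5\rceil$ if one exists. Otherwise I would delete a vertex of degree below it and absorb the loss in the counting of Step~3. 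Either way induction gives that $G-v$ is $5$-partite, with parts $V_1,\dots,V_5$. Since its deficit is small, the parts have sizes close to $5$ each.

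\emph{Step 3 (the extension count, the main obstacle).} Since $G$ is not $5$-partite, $v$ has $a_i\ge1$ neighbours in every part $V_i$. Since $G$ is $K_6$-free, no transversal of the sets $A_i=N(v)\cap V_i$ spans a $K_5$. Hence the set $M$ of missing cross-part edges of $G-v$ must meet every transversal of $A_1,\dots,A_5$. Blocking all $\prod a_i$ transversals needs $|M|$ to be at least about $\min_i a_i$, and more precisely at least the minimum over pairs of $a_ia_j/\max(a_i,a_j)$.

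I would then write the total deficit of $G$ as
$|M| + (\text{imbalance of the part sizes}) + \sum_i (|V_i|-a_i) - (\text{the slack } \lfloor n/5\rfloor \text{ of the Tur\'an degree})$.
The aim is to show this total is at least $\lfloor n/5\rfloor-1$, contradicting the hypothesis. The trade-off drives the argument. If every $a_i$ is large, then $|M|\ge\min_i a_i$ is large. If some $a_i$ is small, then $v$ misses many vertices of $V_i$, so $\sum_i(|V_i|-a_i)$ is large. Balancing the two should give exactly the threshold $\lfloor n/5\rfloor-1$; the extremal example is a Tur\'an graph with one vertex moved and a few edges removed. The delicate bookkeeping, which I expect to be the hardest part, is to get this inequality exactly sharp. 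It must also hold when the part sizes of $G-v$ are unbalanced and when $\delta$ is below the Tur\'an degree. For $n=26$ alone, the slack is only $3$ edges, so this could be finished by a short explicit case analysis on $\min_i a_i\in\{1,2,3,\ge4\}$.
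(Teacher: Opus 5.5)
The paper does not prove this statement: it is imported from Brouwer's report, and the certified search covers $e\in[55,58]$ independently. Your argument therefore has to stand on its own. Its skeleton is right, and Steps~1--2 are easier than you fear. Any minimum-degree vertex can be deleted, because a smaller $\delta$ only makes $\ee(G-v)$ larger. At $n=26$, $\delta\le20$ gives $\ee(G-v)\ge247$ at once. When $5\mid n$ you need $\delta\le n-n/5-1$, which holds unless $\ee(G)\ge t_5(n)$, i.e., unless $G=T(n,5)$.

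The genuine gap is in Step~3, and it has three parts.

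\emph{Your bounds are too weak.} Your quantity $a_ia_j/\max(a_i,a_j)$ is just $\min(a_i,a_j)$, so you only have $|M|\ge\min_i a_i$. Charging the non-neighbours of $v$ in a single part then gives only $|M|+\sum_i(|V_i|-a_i)\ge|V_j|\approx5$. At $n=26$ the contradiction needs this quantity plus the imbalance $I$ of $G-v$ to reach $9$, since the deficit must reach $4$ against a slack of $5$. With all $|V_i|=5$ and $(a_i)=(2,2,5,5,5)$, your estimates give only $2+6=8$. So no case analysis on $\min_i a_i$ closes with these inequalities.

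\emph{The missing idea is to double count transversals.} A non-adjacent pair between $A_i$ and $A_j$ lies in $\prod_{k\ne i,j}a_k$ of the $\prod_k a_k$ transversals. Hence $\sum_{i<j}e_{ij}/(a_ia_j)\ge1$, and so $|M|\ge st$, where $s\le t$ are the two smallest values, attained on parts $V_j,V_k$. Now charge the non-neighbours of $v$ in both $V_j$ and $V_k$. Using $\ee(G-v)\le t_5(25)-I-|M|$ and $\deg v=25-\sum_i(|V_i|-a_i)$, this gives
\[
\ee(G)\le 250-I-st+25-(|V_j|-s)-(|V_k|-t)=276-I-|V_j|-|V_k|-(s-1)(t-1)\le 266.
\]
The last step uses $I+|V_j|+|V_k|\ge10$ for every $5$-partition of $25$ vertices. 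Equality holds for the balanced partition. Moving a vertex from a largest to a smallest part lowers $I$ by at least $1$ and raises the sum of the two smallest parts by at most $1$, so the quantity never increases as you balance. Equality throughout forces $s=1$, which is exactly the Kang--Pikhurko configuration.

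\emph{The slack is off by one when $5\mid n$.} The term to subtract is $(n-1)-(t_5(n)-t_5(n-1))=\lceil n/5\rceil-1$, not $\lfloor n/5\rfloor$. The two agree at $n=26$. At $n=25$, which your induction passes through, your version loses one edge, and the tight case $s=1$ no longer yields a contradiction.
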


\begin{theorem}[\citealp{KangPikhurko2005}]\label{thm:kp}
The maximum number of edges of a $K_6$-free graph on $26$ vertices that is not $5$-partite is exactly
$266$. Up to isomorphism, the extremal graphs are exactly
\[
C_5[1,q,\,n_2{-}1,\,n_1{-}q,\,1]\ \vee\ K_{n_3,n_4,n_5},
\]
where $C_5[a_1,\dots,a_5]$ is the graph obtained from the $5$-cycle by replacing its $i$-th vertex with an independent set of $a_i$ vertices, with all edges between consecutive sets, $\vee$ is the join, which adds all edges between the two graphs, $1\le q\le n_1-1$, and the tuple of part sizes $(n_1,\dots,n_5)$ is one of $(4,4,5,6,6)$, $(4,5,5,5,6)$, $(5,4,5,5,6)$ and $(5,5,5,5,5)$.
\end{theorem}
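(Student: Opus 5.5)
Since Theorem~\ref{thm:kp} is quoted from the literature, I would not reprove it in full generality but only sketch how I would argue the case $n=26$, $r=5$. The proof has two halves: the lower bound $266$ via the stated construction, and a stability argument for the upper bound and for uniqueness.

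\emph{Lower bound.} Fix a tuple $(n_1,\dots,n_5)$ from the list and set $H=C_5[1,q,n_2-1,n_1-q,1]\vee K_{n_3,n_4,n_5}$. The blow-up of $C_5$ has $n_1+n_2+1$ vertices, so $H$ has $n_1+\dots+n_5+1=26$ vertices. A blow-up of $C_5$ is triangle-free, so $\omega(H)\le 2+3=5$. It needs three colors, because $C_5$ does, so $\chi(H)=3+3=6$ and $H$ is not $5$-partite. For $(5,5,5,5,5)$ the blow-up has $q+4q+4(5-q)+(5-q)+1=26$ edges. The join contributes $11\cdot15=165$ edges and $K_{5,5,5}$ contributes $75$, for a total of $266$. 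I would check the other three tuples the same way; each should also give $266$, independently of $q$.

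\emph{Upper bound.} Let $G$ be $K_6$-free, not $5$-partite, with $\ee(G)\ge266$; the aim is $\ee(G)=266$ and $G\cong H$ for one of the listed graphs. Partition $V(G)$ into $V_1,\dots,V_5$ so as to maximize the number of edges between different parts, and call edges inside a part \emph{bad}. Since $G$ is not $5$-partite, there is at least one bad edge.
\begin{enumerate}
\item Stability step. Use Tur\'an's theorem~\citep{Turan1941} and the Erd\H{o}s--Simonovits stability method to show that every $|V_i|\in\{4,\dots,7\}$ and that few cross pairs are missing. The relevant quantity is $270-\ee(G)\le4$ plus the number of bad edges.
\item Local analysis. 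Maximality of the cut means each vertex has at least as many neighbours in every other part as in its own part. Take a bad edge $uv\subseteq V_1$. Any common neighbours $w_2,\dots,w_5$ of $u$ and $v$ chosen from $V_2,\dots,V_5$ must fail to form a clique, since otherwise $\{u,v,w_2,\dots,w_5\}$ would be a $K_6$. Counting these obstructions, each bad edge forces on the order of $\min_i|V_i|\approx5$ missing cross pairs.
\item Counting. Comparing with the bound $\le4$ from the first step forces exactly one bad edge, or one structured cluster of bad edges, and a deficit of exactly $4$. This gives $\ee(G)\le266$, which is consistent with Theorem~\ref{thm:brouwer}: at $267$ edges $G$ would already be $5$-partite.
\end{enumerate}

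\emph{Equality, and the main obstacle.} For $\ee(G)=266$ I would trace the equality cases of the counting. The missing cross pairs must all lie between two parts $V_1,V_2$ together with the bad edge, and the only way to destroy every $K_6$ through the bad edge with so few missing pairs is to reorganize $V_1\cup V_2$ into a $C_5$ blow-up with two singleton classes. That is the shape $C_5[1,q,n_2-1,n_1-q,1]$, and the remaining three parts must be complete to everything. I expect this to be the hardest step: the exhaustive case analysis over part sizes with $\sum n_i=25$ (plus the extra vertex) that yields exactly the four listed tuples. Here I would first try brute-force bookkeeping over the few feasible size vectors. If that fails, a fallback is a small computer check of the $2$-part configuration on $V_1\cup V_2$, which has at most about $13$ vertices.
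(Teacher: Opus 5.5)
\paragraph{Overall assessment.} The paper does not prove this statement. It quotes Kang and Pikhurko and only records that the four tuples are exactly those for which the construction is $K_6$-free, not $5$-partite, and has $266$ edges. Your lower-bound half matches that remark and is correct. To finish it, note that $C_5[1,q,n_2-1,n_1-q,1]$ has $n_1n_2+1$ edges for every $q$, so the total is $\sum_{i<j}n_in_j+n_3+n_4+n_5+1$, which equals $266$ for each listed tuple. The upper bound $\ee(G)\le266$ needs no new argument either, since it is exactly Theorem~\ref{thm:brouwer}.

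\paragraph{The gap.} Everything therefore rests on characterizing the graphs with $266$ edges, and there your Steps~1--3 are assertions rather than arguments. Erd\H{o}s--Simonovits stability is asymptotic and gives nothing explicit at $n=26$. Even F\"uredi's explicit form (deleting at most $270-\ee(G)\le4$ edges makes $G$ $5$-partite) yields only $\ge262$ cross edges. That allows part sizes from $2$ to $8$, not $4$ to $7$.

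In Step~2 your cost estimate is right for a single bad edge $uv\subseteq V_j$. Either some $V_i$ contains no common neighbour of $u$ and $v$, costing at least $|V_i|$ missing cross pairs. Or all four common neighbourhoods $N_i$ are nonempty, and destroying every transversal $K_4$ costs at least $\min_{a\ne b}|N_a||N_b|$ further missing pairs, hence at least $|V_a|+|V_b|-1$ in total. These costs are not additive, though: bad edges through a common vertex can share missing pairs. So ``exactly one bad edge, or one structured cluster'' is precisely the statement that needs proof. The missing lemma is that a maximum $5$-cut of $G$ has exactly one bad edge, and nothing in your sketch establishes it.

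\paragraph{After the lemma.} Once you have that lemma, the equality analysis you expected to be hardest is a short convexity argument. Your computer fallback on $V_1\cup V_2$, which presupposes the very structure to be proved, is then unnecessary.

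With parts of sizes $m_1,\dots,m_5$ and one bad edge, exactly $\ee(K_{m_1,\dots,m_5})-265$ cross pairs are missing. If all $N_i$ are nonempty, this gives $(m_a+1)^2+(m_b+1)^2+\sum_{k\ne a,b}m_k^2\le150$ for some $a\ne b$, with the five integers summing to $28$. That is impossible, since the minimum is $158$.

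Otherwise some $N_i$ is empty, and $(m_i+1)^2+\sum_{k\ne i}m_k^2\le147$ for five integers summing to $27$. Their minimum is $147$, attained only by $\{6,6,5,5,5\}$. Equality forces every vertex of $V_i$ to miss exactly one of $u,v$, and no other cross pair to be missing. Non-$5$-partiteness makes both sides of that split nonempty.

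The result is $C_5[1,q,m_j-2,m_i-q,1]$ joined with the complete tripartite graph on the other three parts, with $n_1=m_i$ and $n_2=m_j-1$. The four choices $(m_i+1,m_j)\in\{5,6\}^2$ give exactly the four listed tuples.
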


These tuples are exactly the ones for which the construction is $K_6$-free, not $5$-partite, and has $266$ edges. Every part of the pentagon skeleton and of the tripartite part $K_{n_3,n_4,n_5}$ is an independent
set. Fig.~\ref{fig:kp} shows an example of the construction.

\begin{figure}[h]
\centering
\includegraphics{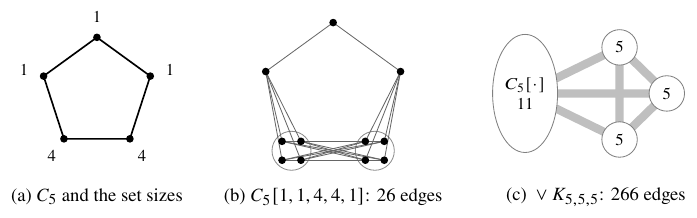}%
\caption{Construction of the extremal graph $C_5[1,1,4,4,1]\vee K_{5,5,5}$ of \citealp{KangPikhurko2005} for the tuple
$(5,5,5,5,5)$ and $q=1$. (a)~The $5$-cycle skeleton: each vertex labeled with the size of the set that replaces it.
(b)~The blow-up: every set is independent, and consecutive sets are joined by all edges, $26$ in total.
(c)~The join with $K_{5,5,5}$: a gray band stands for all edges between two parts. The result has $26$ vertices
and $266$ edges.}
\label{fig:kp}
\end{figure}

\paragraph{SAT modulo symmetries.}
The computational core enumerates isomorph-free graphs subject to combinatorial constraints. As we mentioned in the introduction, encoding the constraints with one Boolean variable per possible edge and running a SAT solver to enumerate models fails at our scale: the vertex-relabeling group $S_{26}$ has $26!$ elements. Even though static symmetry-breaking constraints~\citep{Crawford1996} can be added to the formula before solving for excluding some symmetric copies, they can remove only part of this symmetry in practice.
\emph{SAT modulo symmetries} (SMS)~\citep{KirchwegerSzeider} interleaves a conflict-driven clause-learning (CDCL) SAT solver with a propagator. SMS compares adjacency matrices lexicographically, as $0/1$ strings of their entries above the
diagonal read row by row, and calls a graph \emph{canonical} if its matrix is the smallest in its
isomorphism class. The propagator runs a \emph{minimality check} at regular intervals: it looks for a vertex permutation showing that no completion of the current partial adjacency matrix is canonical, and if it finds one, it learns a \emph{symmetry-breaking clause} (\emph{symmetry clause} for short) that prunes the branch. When the constraints are invariant under relabeling the vertices, a complete run of SMS emits at least one representative of every isomorphism class of graphs satisfying them.

We use the official implementation of SMS\footnote{\url{https://github.com/markirch/sat-modulo-symmetries}}.
Two of its features matter here. First, SMS bounds the work of each minimality check by a \emph{cutoff} on its number of
recursive steps ($200{,}000$ by default). A check that reaches the cutoff keeps
the graph, so in its enumerate-all mode with a cutoff SMS may emit several isomorphic copies
of a class, and its output contains at least one representative of every isomorphism class. This is
exactly what a nonexistence conclusion needs. Second, SMS supports cube-and-conquer. A \emph{cube} is a conjunction of literals that fixes the
truth values of chosen variables, and the $2^k$ cubes over $k$ chosen variables split the formula into
disjoint subproblems whose union is the whole search space.
A proof-logging SMS also emits the symmetry clauses along with a \emph{permutation witness}. The witness contains a vertex permutation showing that every graph the clause excludes has a lexicographically smaller relabeling. Therefore, the clause excludes no canonical graph (Section~\ref{sec:cert}). The
formula together with these clauses, the \emph{augmented formula}, can then be refuted by an ordinary
SAT solver and the refutation checked by an independent verifier~\citep{KirchwegerManriqueSzeider}, which verifies in Lean the encoding, the symmetry clauses and the refutation of a single formula. That framework does not
yet support cube-and-conquer. Our refutations total
$5.5$\,TB in the DRAT format (Table~\ref{tab:counts}). We therefore use only its proof-logging
SMS build, verify the refutations with \texttt{drat-trim}, and check the symmetry clauses
with our own checker (Section~\ref{sec:cert}).

\section{Reduction to a Single Color Class}\label{sec:reduction}

For contradiction, we suppose that a balanced $5$-coloring of $K_{26}$ exists, with color classes
$G_1,\dots,G_5$ partitioning the $325$ edges. Every set $S$ of $6$ vertices, a \emph{$6$-set}, spans
$\binom{6}{2}=15$ edges, which use all $5$ colors. Hence, for every color class $G_i$:
\begin{itemize}
\item[\textbf{(A)}] $\alpha(G_i)\le5$: A $6$-set without an edge of $G_i$ would miss color~$i$.
\item[\textbf{(L)}] Every $6$-set spans at most $11$ edges of $G_i$: The other four colors each occupy at
least one of the $15$ edges, so at most $15-4=11$ are of color~$i$. In particular $\omega(G_i)\le5$, since a
$K_6$ would be a $6$-set spanning all $15$ edges in one color.
\end{itemize}
Properties (A) and (L) hold for \emph{every} color class, and the smallest class controls the edge
count.

\begin{lemma}\label{lem:band}
Some color class $G_0$ of a balanced $5$-coloring of $K_{26}$ satisfies $\alpha(G_0)\le5$, property
\textup{(L)}, and $55\le\ee(G_0)\le65$.
\end{lemma}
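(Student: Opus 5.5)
Take $G_0$ to be a color class with the fewest edges. Properties (A) and (L) were already shown to hold for every color class, so only the two edge bounds need proof.

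\emph{Upper bound.} The classes $G_1,\dots,G_5$ partition the $325$ edges of $K_{26}$. The smallest class therefore has at most the average, $\ee(G_0)\le 325/5=65$.

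\emph{Lower bound.} This comes from (A) via the Caro--Wei inequality $\alpha(G)\ge\sum_v 1/(d(v)+1)$. Write $d_v$ for the degrees in $G_0$. Property (A) gives
\[
5\ge\alpha(G_0)\ge\sum_{v}\frac{1}{d_v+1}.
\]
The function $x\mapsto 1/(x+1)$ is convex, and the degrees are integers. For a fixed degree sum $2\ee(G_0)$, the sum $\sum_v 1/(d_v+1)$ is therefore minimized when all degrees differ by at most one: if two degrees differ by at least two, moving one unit from the larger to the smaller does not increase the sum.

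Suppose for contradiction that $\ee(G_0)\le 54$, so the degree sum is at most $108$. Since $\sum_v 1/(d_v+1)$ is monotone decreasing in each degree, we may raise the sum to exactly $108$ without increasing it, and assume the degrees are balanced. Over $26$ vertices this means $4$ vertices of degree $5$ and $22$ of degree $4$ (since $108=26\cdot4+4$). The bound becomes
\[
\sum_v\frac1{d_v+1}\ge \frac{22}{5}+\frac{4}{6}=4.4+0.\overline{6}\approx5.07>5,
\]
contradicting $\alpha(G_0)\le5$. Hence $\ee(G_0)\ge55$.

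As a sanity check, $110=26\cdot4+6$ gives $22/5$ replaced by $20/5+6/6=5$, which is consistent with $55$ being attainable by this bound.

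The only step needing care is the integer-convexity (balancing) argument together with the monotonicity reduction, which makes the bound hold for all degree sequences with sum at most $108$ and not just balanced ones. This is routine. Otherwise the lemma simply combines (A), (L), averaging, and Caro--Wei.
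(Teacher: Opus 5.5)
Your proof is correct and follows the paper's argument: take the smallest class, use averaging for $\ee(G_0)\le65$, and combine Caro--Wei with (A) for the lower bound. The only difference is that the paper uses the averaged form $\alpha(G_0)\ge N/(\bar d+1)$, which already gives $\ee(G_0)\ge 54.6$, so your integer balancing step is valid but not needed.
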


\begin{proof}
Take $G_0$ with the fewest edges. As the five classes partition $325$ edges, $\ee(G_0)\le325/5=65$. For the lower bound, the Caro--Wei inequality \citep{Caro1979,Wei1981} gives
$\alpha(G_0)\ge N/(\bar d+1)$ with $\bar d=2\ee(G_0)/N$ the average degree. With $\alpha(G_0)\le5$ and
$N=26$ this forces $\bar d\ge 21/5$, hence $\ee(G_0)=N\bar d/2\ge 54.6$, so $\ee(G_0)\ge55$.
\end{proof}

Lemma~\ref{lem:band} reduces Theorem~\ref{thm:main} to the Main Lemma stated in the introduction. If
the Main Lemma holds, the graph $G_0$ cannot exist, so the assumed coloring cannot exist. The rest of
the paper proves the Main Lemma.

\paragraph{The complement view.}
For a candidate $G$ as in the Main Lemma, with $e=\ee(G)$ edges, set $H=\eb G$.
Then $H$ is $K_6$-free, since $\omega(H)=\alpha(G)\le5$, and has $\ee(H)=325-e\in[260,270]$.
Property (L) becomes a lower bound on local density: every $6$ vertices span at least $15-11=4$ edges of
$H$. In particular $\alpha(H)\le5$. We prove the Main Lemma by ruling out, for each $e\in[55,65]$, a
$K_6$-free graph $H$ on $26$ vertices with $\ee(H)=325-e$ in which every $6$-set spans at least $4$
edges.

\begin{remark}\label{rem:Lneeded}
Property (L) is not implied by $\alpha\le5$ and $\omega\le5$ alone, so property (L) excludes graphs that $\alpha\le5$
and $\omega\le5$ allow. For a witness, we take $H_0=C_5[1,1,4,4,1]\vee K_{5,5,5}$. The graph has $266$ edges
and $\alpha(H_0)=\omega(H_0)=5$, so its complement $G$ has $\ee(G)=59$, and deleting up to six suitable edges from $H_0$ covers $e\in[60,65]$. Each of these $H$ has a $6$-set spanning at most $3$ edges, so the corresponding $G$ has a $6$-set spanning at least $12$ and violates (L).
\end{remark}

\paragraph{The reduction is formally verified.}
The argument of this section is mechanically checked in Lean~4 (Section~\ref{sec:cert}, Listing~\ref{lst:contract}): the implications
``balanced $\Rightarrow$ (A)'' and ``balanced $\Rightarrow$ (L)'', the bound ``some class has at most
$65$ edges'', and the lower bound of $55$ edges are all proved. Lean derives the lower bound from the
formalization of Tur\'an's theorem in Mathlib, applied to the complement.

\section{The Analytic Edge Counts}\label{sec:analytic}

We prove the Main Lemma for $e\in[55,59]$ analytically. Throughout, $H=\eb G$ is $K_6$-free with
$\ee(H)=325-e$, and we look for a $6$-set of $H$ spanning at most $3$ edges, which violates (L).

\paragraph{Edge counts $55$ to $58$.}
Here $\ee(H)\ge 267$, so by Brouwer's theorem (Theorem~\ref{thm:brouwer}) $H$ is $5$-partite. A
$5$-partition of $26$ vertices has a part of size at least $\lceil 26/5\rceil=6$, an independent $6$-set
of $H$ spanning $0$ edges, violating (L).

\paragraph{Edge count $59$.}
Here $\ee(H)=266$. If $\chi(H)\le5$, then as above some part of a $5$-partition of $H$ has at least $6$ vertices, an
independent $6$-set. Otherwise $H$ is $K_6$-free and not $5$-partite, so by Kang--Pikhurko (Theorem~\ref{thm:kp}) it
is one of the explicit graphs $C_5[1,q,n_2{-}1,n_1{-}q,1]\vee K_{n_3,n_4,n_5}$, and we make these
explicit. Fix one of the four admissible size tuples $(n_1,\dots,n_5)$ and a complete $5$-partite graph
$K_{n_1,\dots,n_5}$ with parts $V_1,\dots,V_5$. Pick a vertex $y\in V_2$ and a nonempty proper subset
$A\subsetneq V_1$, delete every edge between $y$ and $A$, and add one new vertex $v$ adjacent to
$V_3\cup V_4\cup V_5\cup\{y\}\cup A$. The resulting graph has $26$ vertices and $266$ edges, is
$K_6$-free, and is not $5$-partite. The parts $V_3$, $V_4$, $V_5$ can be permuted and vertices within a part are
interchangeable, so the choice of $y$ and of $A$ of a given size does not matter up to isomorphism: the construction is determined by the size tuple together with
$q=|A|$, $1\le q\le n_1-1$. No operation adds an edge inside a part, so each $V_i$ remains independent.
This gives fourteen candidate graphs, possibly with isomorphic repetitions (reflecting the $5$-cycle maps the graph for $q$
to the graph for $n_1{-}q$): $q\in\{1,2,3\}$ for $(4,4,5,6,6)$ and $(4,5,5,5,6)$, and
$q\in\{1,2,3,4\}$ for $(5,4,5,5,6)$ and
$(5,5,5,5,5)$. For each of the fourteen candidate graphs we minimize, over all $\binom{26}{6}$ $6$-sets, the number
of $H$-edges spanned. The minimum is $0$ for the ten graphs of the three tuples with a part of size $6$,
since that part is an independent $6$-set, and $1$ for the four graphs of $(5,5,5,5,5)$,
so in every case $H$ has a $6$-set spanning at most $3$ edges and $G$ violates (L).
As an independent check, a SAT solver confirms that none of the fourteen complements extends to a
balanced $5$-coloring of $K_{26}$ (the fourteen graphs and the check are in the supplementary archive, Section~\ref{sec:cert}).
Hence the Main Lemma holds for $e\in[55,59]$.

\section{The Computational Core}\label{sec:computation}

We continue to prove the Main Lemma for $e\in[60,65]$, equivalently, in the complement view, for $\ee(H)\in[260,265]$. We settle each of the $6$ edge counts separately by an exhaustive graph enumeration covering all isomorphism classes. Algorithm~\ref{alg:pipeline} gives the pipeline in outline: the search of this section together with the certification of Section~\ref{sec:cert}.

\begin{algorithm}[h]
\caption{The certified search at edge count $e$, run in full for the six searched counts
$e\in[60,65]$. The five analytic counts $e\in[55,59]$ run the same loop on a single monolithic cell each
(Section~\ref{sec:cert}). The outcome is $z_e=0$ for all
eleven counts, with every certificate checked.}
\label{alg:pipeline}
\begin{algorithmic}[1]
\Require a target edge count $e$ for graphs on $26$ vertices
\Ensure the property (L) survivor count $z_e$; one checked refutation per cell
\State $F_e\gets\textsc{Encode}(\alpha\le5,\ \omega\le5,\ \ee(G)=e)$ \Comment{compared with an independent regeneration}
\State $B\gets$ the $12$ edge variables whose frequency in a saved family is nearest $\tfrac12$ (Sec.~\ref{sec:cubevars})
\State $\mathcal C\gets\{F_e\wedge b\mid b\in\{0,1\}^{B}\}$ if $e\ge63$, else $\{F_e\}$ \Comment{cubes or monolithic}
\State $z_e\gets0$
\ForAll{cells $C\in\mathcal C$} \Comment{one cluster node task each}
  \State $\mathcal G\gets\textsc{Enumerate}(C)$ \Comment{SMS; completeness checked (Sec.~\ref{sec:cert})}
  \State $z_e\gets z_e+|\{G\in\mathcal G\mid G\text{ satisfies (L)}\}|$
  \State $(\Sigma,W)\gets\textsc{ProofLog}(C\wedge L_D)$, with $D$ read off $\mathcal G$ \Comment{symmetry clauses, witnesses}
  \State refute $C\wedge L_D\wedge\Sigma$ with the SAT solver CaDiCaL; check by \texttt{drat-trim}
  \State check each clause of $\Sigma$ against the lex-leader inequality of its witness in $W$ (Sec.~\ref{sec:cert})
\EndFor
\State \Return $z_e$ \Comment{$z_e=0$ for all eleven $e$ is the Main Lemma}
\end{algorithmic}
\end{algorithm}

\subsection{The Encoding}\label{sec:encoding}

Fix the vertex set $V=\{0,\dots,25\}$ and number the $325$ pairs $\{u,v\}$ with $u<v$ in
lexicographic order, so that $x_1,x_2,\dots,x_{325}$ stand for the pairs $\{0,1\},\{0,2\},\dots,
\{24,25\}$. An assignment to these variables is a graph $G$ on $V$: the variable of a pair is true
if the pair is an edge of $G$. For a $6$-set $S\subseteq V$, let $X(S)$ be the set of the
$15$ variables of pairs inside $S$. For a target edge count $e$, the \emph{base formula} $F_e$ is the
conjunction of three clause groups.
\begin{itemize}

  \item[(I)] \emph{Independence.} For every $6$-set $S$ the clause $\bigvee_{x\in X(S)}x$: some edge lies inside $S$. This clause has length $15$, and there are $\binom{26}{6}=230{,}230$ clauses in all. This group forces $\alpha(G)\le5$.

\item[(K)] \emph{Clique.} For every $6$-set $S$, some edge is missing inside $S$; the clause is $\bigvee_{x\in X(S)}\neg x$, again of length $15$. This group forces $\omega(G)\le5$.

\item[(E)] \emph{Edge count.} We use the sequential counter \citep{Sinz2005} encoding over edge variables, $x_1,\dots,x_{325}$. The registers are defined as $s_{i,j}$ for $1\le i\le325$ and $1\le j\le e$. $s_{i,j}$ is true when at least $j$ of the first $i$ pairs are edges. We write $p_{i,j}$ for the literal expressing that at least $j$ of the first $i$ pairs are edges: $p_{i,j}=s_{i,j}$, except that $p_{1,1}=x_1$ and $p_{i,0}$ is the constant true. The register $s_{1,1}$ occurs in no clause. For $2\le i\le325$ and $1\le j\le e$ the clauses
  \[
  \neg p_{i-1,j}\vee s_{i,j},\quad \neg s_{i,j}\vee p_{i-1,j}\vee x_i,\quad
  \neg x_i\vee\neg p_{i-1,j-1}\vee s_{i,j},\quad \neg s_{i,j}\vee p_{i-1,j-1}
  \]
  express $s_{i,j}\leftrightarrow p_{i-1,j}\vee(x_i\wedge p_{i-1,j-1})$. For $j=1$ the last clause is trivially true and omitted. The family is completed by the units $\neg s_{1,j}$ for $2\le j\le e$, the unit $s_{325,e}$, and the clauses $\neg p_{i,e}\vee\neg x_{i+1}$ for $1\le i\le324$.
\end{itemize}
The unit $s_{325,e}$ therefore forces at least $e$ edges. The last clauses forbid an edge once $e$ edges have been counted, so exactly $e$ edges remain. Conversely, every assignment with exactly $e$ true edge variables extends to a model by giving each register its intended value, and this extension is unique on the variables that occur in $F_e$.

\begin{lemma}\label{lem:encoding}
For every graph $G$ on $V$, the assignment of $G$ extends to a model of $F_e$ if and only if
$\alpha(G)\le5$, $\omega(G)\le5$, and $\ee(G)=e$. The extension is unique on the variables in $F_e$.
\end{lemma}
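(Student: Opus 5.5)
The proof splits $F_e$ into its three clause groups and treats them separately. Groups (I) and (K) mention only edge variables, so they are fixed by the graph $G$ alone. Clause (I) for a $6$-set $S$ is satisfied exactly when $S$ spans an edge of $G$. Hence all of (I) holds iff no $6$-set is independent, that is, iff $\alpha(G)\le5$. Symmetrically, all of (K) holds iff no $6$-set is a clique, that is, iff $\omega(G)\le5$. The substance of the lemma therefore lies in group (E): I will show that the assignment of $G$ extends to a model of (E) iff $\ee(G)=e$, and that the extension is then unique on the registers occurring in $F_e$.

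For the "if" direction with $\ee(G)=e$, set every register to its intended value: $s_{i,j}$ is true iff at least $j$ of $x_1,\dots,x_i$ are true. With the conventions $p_{1,1}=x_1$ and $p_{i,0}=\top$, every literal $p_{i,j}$ then also carries its intended meaning. Each clause is checked as follows.
\begin{itemize}
\item The four clauses at $(i,j)$ encode the recurrence "at least $j$ among the first $i$ iff at least $j$ among the first $i-1$, or $x_i$ holds and at least $j-1$ among the first $i-1$". This recurrence is true.
\item The units $\neg s_{1,j}$ for $j\ge2$ hold, since one pair cannot supply two edges.
\item The unit $s_{325,e}$ holds because $\ee(G)=e$.
\item Each clause $\neg p_{i,e}\vee\neg x_{i+1}$ holds. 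If at least $e$ of the first $i$ pairs were edges and $x_{i+1}$ were also true, $G$ would have at least $e+1$ edges.
\end{itemize}

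For the "only if" direction and for uniqueness, take any model of (E) and prove by induction on $i$ that each $p_{i,j}$ equals its intended value. The base case $i=1$ is $p_{1,1}=x_1$, which is exact, together with the units $\neg s_{1,j}$ for $j\ge2$. For the step, note that the four clauses at $(i,j)$ together are equivalent to $s_{i,j}\leftrightarrow p_{i-1,j}\vee(x_i\wedge p_{i-1,j-1})$. One needs to check that the first two clauses give "$p_{i-1,j}\to s$" and "$s\to p_{i-1,j}\vee x_i$", and that the last two give "$x_i\wedge p_{i-1,j-1}\to s$" and "$s\to p_{i-1,j-1}$". Combined with monotonicity $p_{i-1,j}\to p_{i-1,j-1}$, which holds by the induction hypothesis, these four implications yield the biconditional. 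For $j=1$ the fourth clause is omitted, but there $p_{i-1,0}$ is true, so nothing is lost. The induction therefore forces every register to its intended value, which proves uniqueness on all $s_{i,j}$ with $(i,j)\ne(1,1)$. The register $s_{1,1}$ appears in no clause and is excluded by the phrase "variables in $F_e$". Finally, the unit $s_{325,e}$ gives $\ee(G)\ge e$. If $\ee(G)>e$, let $i$ be the index at which the count of edges among $x_1,\dots,x_i$ first reaches $e$; then $x_k$ is true for some $k>i$. Since $p_{i',e}$ stays true for all $i'\ge i$, the clause $\neg p_{k-1,e}\vee\neg x_k$ is violated, a contradiction. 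Hence $\ee(G)=e$.

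The main obstacle is the inductive step in the converse. There the clause set must be shown to pin down $s_{i,j}$ exactly, which requires the monotonicity invariant and the boundary conventions at $j=1$, $j=e$, and $i=1$. Everything else is bookkeeping.
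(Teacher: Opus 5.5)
Your proof is correct and takes the same route as the paper's justification. (I) and (K) are equivalent to $\alpha(G)\le5$ and $\omega(G)\le5$, the intended register values give a model when $\ee(G)=e$, and in any model the registers are forced to those values, so $s_{325,e}$ gives at least $e$ edges and the clauses $\neg p_{i,e}\vee\neg x_{i+1}$ forbid more; your induction on $i$ spells out what the paper states in a few lines.

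One small shift of emphasis: the four clauses imply the biconditional even without monotonicity, since clauses 2 and 4 give $s_{i,j}\to(p_{i-1,j}\vee x_i)\wedge p_{i-1,j-1}$. Monotonicity is needed in the ``if'' direction instead: the fourth clause $\neg s_{i,j}\vee p_{i-1,j-1}$ is stronger than the recurrence, and it holds for the intended values because those values are monotone.
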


\begin{table}[h]\centering
\caption{Formula sizes for the searched edge counts. $F_e$ has $325\times(e+1)$ variables and
$460{,}460+1297\times e$ clauses. The per-cell columns are maxima over the cells of an edge count, except
the mean number of symmetry clauses. The augmented formula of a cell is $F_e$ with its cube, restricted
local-density and symmetry breaking clauses. $|D|$ is the number of distinct $6$-sets in its restricted local-density clauses, one densest $6$-set per base graph; it is at most the number of base graphs $\mathcal G$ of line~$6$ of Algorithm~\ref{alg:pipeline}, and smaller when several base graphs share a densest $6$-set. The five analytic-band cells $e\in[55,59]$ have between $18{,}200$
and $19{,}500$ variables, and their corresponding formulas $F_e$ have between $531{,}795$ and $536{,}983$ clauses.}
\label{tab:sizes}
\setlength{\tabcolsep}{4pt}
\begin{tabular}{rrrrrrrr}
\toprule
 & & & & & \multicolumn{2}{c}{symmetry clauses} & augmented\\
\cmidrule(lr){6-7}
$e$ & variables & clauses of $F_e$ & cells & $|D|$ & mean & max & clauses\\
\midrule
60 & $19{,}825$ & $538{,}280$ & $1$ & $5$ & $33{,}274$ & $33{,}274$ & $573{,}829$\\
61 & $20{,}150$ & $539{,}577$ & $1$ & $7$ & $43{,}150$ & $43{,}150$ & $585{,}912$\\
62 & $20{,}475$ & $540{,}874$ & $1$ & $25$ & $52{,}122$ & $52{,}122$ & $604{,}371$\\
63 & $20{,}800$ & $542{,}171$ & $4{,}096$ & $13$ & $17{,}713$ & $44{,}180$ & $586{,}363$\\
64 & $21{,}125$ & $543{,}468$ & $4{,}096$ & $28$ & $21{,}860$ & $54{,}983$ & $598{,}918$\\
65 & $21{,}450$ & $544{,}765$ & $4{,}096$ & $56$ & $27{,}145$ & $70{,}440$ & $617{,}665$\\
\bottomrule
\end{tabular}
\end{table}

The counter contributes $325\times e$ auxiliary variables (including the unused $s_{1,1}$) and $1297\times e$ clauses, so $F_e$ has $325\times(e+1)$
variables and $460{,}460+1297\times e$ clauses (Table~\ref{tab:sizes}). A cell of the search adds up to
three further layers to $F_e$, and the result is the cell's augmented formula refuted by CaDiCaL in the end
(Section~\ref{sec:cert}).
\begin{itemize}
\item[(U)] \emph{Cube.} For $e\ge63$, the $12$ unit clauses fixing the cube variables
  $B$ (chosen in Section~\ref{sec:cubevars}), namely $x_{222}$, $x_{159}$, $x_{175}$, $x_{160}$, $x_{174}$, $x_{208}$, $x_{176}$, $x_{235}$, $x_{189}$, $x_{172}$, $x_{220}$, $x_{255}$, the variables of the pairs $\{11,13\}$, $\{7,12\}$, $\{8,11\}$, $\{7,13\}$, $\{8,10\}$, $\{10,13\}$, $\{8,12\}$, $\{12,13\}$, $\{8,25\}$, $\{7,25\}$, $\{10,25\}$, $\{13,21\}$ in this order, to one of their $2^{12}=4096$ truth assignments. The $4096$ cubes
  are pairwise disjoint and together cover all assignments.
\item[(L$_D$)] \emph{Restricted local density.} For a set $D$ of $6$-sets, the clauses
  $\bigvee_{x\in Y}\neg x$ for every $S\in D$ and every $12$-element $Y\subseteq X(S)$: at most
  $11$ of the $15$ edges inside $S$. We write $L_D$ for this set of clauses. These are $\binom{15}{12}=455$ clauses per $6$-set, each
  implied by property (L), so the layer excludes no graph that satisfies (L). The \emph{base graphs} of a cell are the graphs
  that SMS emits for $F_e$ and the cell's cube (Section~\ref{sec:enum}). For each base graph, $D$
  contains one $6$-set on which that graph has the most edges. No base graph satisfies (L)
  (Table~\ref{tab:counts} and Section~\ref{sec:cert}), so each of these $6$-sets spans at least $12$ edges, and the layer excludes
  every base graph.
\item[($\Sigma$)] \emph{Symmetry.} The symmetry-breaking clauses that SMS emits while solving the
  cell, each validated against its permutation witness (Section~\ref{sec:cert}).
\end{itemize}

\paragraph{The sparse-versus-dense choice.}
Generating $G$ rather than its dense complement $H$ is the key modeling choice. With $\alpha\le5$ and at most $65$ edges, every graph of the \emph{family} that $F_e$ describes is sparse. In our preliminary runs, the per-graph minimality check was efficient, producing $0.3$ to $0.5$ graphs per second. But the dense complement has about $260$ edges, and the minimality check of SMS on dense graphs with $26$ vertices is so costly that a direct enumeration of $H$ produced no graphs within $7$ minutes.

\subsection{Enumeration and Cube-and-Conquer}\label{sec:enum}

For each $e$, we run SMS and test property (L) on each emitted graph. The test computes over all $\binom{26}{6}$ $6$-sets the maximum number of spanned $G$-edges and reports any graph whose maximum is at most $11$. The counts $e\in\{60,61,62\}$ are enumerated monolithically. For
$e\in\{63,64,65\}$ the search is partitioned by cube-and-conquer
\citep{HeuleKullmannWieringaBiere}: we fix $12$ edge variables in all $2^{12}=4096$ ways, splitting the formula into disjoint
subproblems whose union is the full search space. Each cube runs as a separate single-core task. A \emph{cell} is one unit of the search: the monolithic formula of a count $e\le62$, or one cube of a partitioned count. The selection of the cube variables is presented in Section~\ref{sec:cubevars}. The graph enumeration is complete, and the guarantee attaches to the union of the cubes. In the SMS propagator, the minimality check compares a graph with all its relabelings, including those that violate the cube. Therefore, a single cube need not emit every graph that satisfies it, as only the lexicographically minimal representatives of the class are emitted. Since the $4096$
cubes are pairwise disjoint, every isomorphism class of the family has its canonical graph in exactly one cube. The union of the cubes has at least one representative of every class (line~$6$ of Algorithm~\ref{alg:pipeline}). Afterwards, property (L) is tested on the emitted graphs. In Table~\ref{tab:counts}, duplicate emissions only inflate the counts without affecting the final number of emitted graphs that satisfy property (L), which is $0$.

\begin{table}[h]\centering
\caption{Results for the searched edge counts $e\in[60,65]$. Cells: $1$ for a monolithic count, $4096$
cubes for a partitioned one. Emitted: the base graphs, i.e., the graphs with $\alpha\le5$, $\omega\le5$
and $e$ edges that SMS emits, repetitions included; sat.~(L): those satisfying property (L).
Times are in core-hours (core-h), hours of single-threaded tasks. The certification columns cover the
full pipeline of Section~\ref{sec:cert} per cell, from the enumeration to the \texttt{drat-trim}
check: the wall-clock of the slowest cell, the total time, and the total size of the raw DRAT
refutations. All times come from the certification run.}\label{tab:counts}
\setlength{\tabcolsep}{4pt}
\begin{tabular}{rrrrrrrr}
\toprule
 & & \multicolumn{3}{c}{enumeration} & \multicolumn{3}{c}{certification}\\
\cmidrule(lr){3-5}\cmidrule(lr){6-8}
$e$ & cells & emitted & sat.\ (L) & core-h & slowest (h) & core-h & DRAT (GB)\\
\midrule
60 & $1$ & $124$ & $0$ & $0.3$ & $0.9$ & $0.9$ & $1.0$\\
61 & $1$ & $503$ & $0$ & $0.5$ & $1.2$ & $1.2$ & $0.9$\\
62 & $1$ & $2{,}455$ & $0$ & $0.8$ & $1.6$ & $1.6$ & $1.3$\\
63 & $4{,}096$ & $26{,}521$ & $0$ & $548$ & $5.5$ & $1{,}959$ & $1{,}296$\\
64 & $4{,}096$ & $238{,}420$ & $0$ & $772$ & $6.7$ & $2{,}799$ & $1{,}760$\\
65 & $4{,}096$ & $2{,}605{,}003$ & $0$ & $1{,}219$ & $9.8$ & $4{,}143$ & $2{,}484$\\
\midrule
total & $12{,}291$ & $2{,}873{,}026$ & $0$ & $2{,}540$ & & $8{,}905$ & $5{,}543$\\
\bottomrule
\end{tabular}
\end{table}

Table~\ref{tab:counts} reports, for each searched edge count, the number of graphs emitted and the number satisfying property (L). The latter number is always $0$. Therefore, no graph on $26$ vertices with $e\in[60,65]$ edges satisfies both $\alpha\le5$ and property (L). Together with Section~\ref{sec:analytic}, this proves the Main Lemma. The remaining columns report the cost: the enumeration takes $2{,}540$ core-hours in total, and the certification of Section~\ref{sec:cert}, whose first stage repeats this enumeration, takes $8{,}905$.

\subsection{Choosing the Cube Variables}\label{sec:cubevars}

Any set $B$ of $k$ edge variables partitions the search space into $2^k$ cubes, but the workload is not guaranteed to spread evenly. Because SMS keeps mostly lexicographically minimal graphs, most edge variables are nearly constant over the family. In the preliminary experiments, a run for $e=62$ that was stopped early had saved $2{,}444$ of the $2{,}455$ graphs. On these graphs, we found $236$ of the $325$ edge variables occur in $<5\%$ of the graphs, and $47$ variables occur in $>95\%$. Surprisingly, only one variable lies in $[35\%,65\%]$. Fixing a nearly constant variable contributes almost nothing to the split. Therefore, we take $B$ to be the $k$ edge variables whose presence frequency is closest to $\tfrac12$ on the family for $e=62$. Later, we use the same $B$ for $e\in\{63,64,65\}$.

To verify the effectiveness of the mentioned variable selection method, we compare it with other alternatives on the same family. Table~\ref{tab:cubevars} shows the number and the share of the emitted graphs in the heaviest cube. This ratio of graphs is a proxy for load balance that is available before any cube has run (in the final search $e=62$ is a single cell; see Table~\ref{tab:counts}).
Running the cubes of every candidate would cost one cube-and-conquer run per candidate, so we compute these numbers from the saved graphs. A saved graph sets all $325$ edge variables, so its edges show which cube it satisfies. In every cube set of Table~\ref{tab:cubevars}, each graph satisfies exactly one cube, so counting the graphs per cube needs no solver run on the cubes. The heaviest cube matters because it shows the imbalance of the load among cubes. From the share in Table~\ref{tab:cubevars}, the default SMS cubing puts almost all emitted graphs into a single cube, and
the matchings do little better, leaving three quarters of the family in one cube, and the first twelve edges at one vertex do not split it at all. However, the frequency-based selections bring the heaviest share down to $15.1\%$, $8.3\%$ and $7.0\%$ for $k=10$, $12$ and $14$. We took $k=12$ for later experiments: its $4096$ cubes give about $14$ cubes for each of the $288$ cores that ran at once, while $k=14$ would quadruple the number of cubes for a gain of $1.3$ points. We also tried the look-ahead cube generator \texttt{march\_cu}~\cite{HeuleKullmannWieringaBiere}, which was found to be the most suitable cuber for SMS~\citep{KirchwegerXiaPeitlSzeider}. It did not return cubes within $300$ seconds on the full formula for $e=65$. An SMS pre-run removed only $13\%$ of its clauses, since the $6$-set clauses do not simplify away.

\begin{table}[h]\centering
\caption{Cube-variable choices compared on one family: $2{,}444$ graphs with $\alpha\le5$, $\omega\le5$ and $62$ edges. Heaviest cube: the number and share of these graphs in the most populated cube. Nonempty: the number of cubes that contain at least one of these graphs. A matching is a set
of pairwise disjoint edges. SMS's default cubing (SMS option \texttt{-{}-simple-assignment-cutoff~30}) emits the
current partial assignment as a cube once at least $30$ edge variables are assigned. The $10{,}246$ cubes fix $30$ to $57$ edge variables each. The SMS cubes come from our first cubing run on the same family.}
\label{tab:cubevars}
\setlength{\tabcolsep}{4pt}
\begin{tabular}{lrrrr}
\toprule
cube variables & $k$ & cubes & nonempty & heaviest cube\\
\midrule
SMS default cubing & -- & $10{,}246$ & $2$ & $2{,}443$ ($99.96\%$)\\
edges of an $8$-edge matching & $8$ & $256$ & $5$ & $1{,}899$ ($77.7\%$)\\
edges of a $12$-edge matching & $12$ & $4{,}096$ & $8$ & $1{,}812$ ($74.1\%$)\\
first $12$ edges at vertex $0$ & $12$ & $4{,}096$ & $1$ & $2{,}444$ ($100\%$)\\
frequency nearest $\tfrac12$ & $10$ & $1{,}024$ & $122$ & $368$ ($15.1\%$)\\
frequency nearest $\tfrac12$ (chosen) & $12$ & $4{,}096$ & $206$ & $203$ ($8.3\%$)\\
frequency nearest $\tfrac12$ & $14$ & $16{,}384$ & $353$ & $172$ ($7.0\%$)\\
\bottomrule
\end{tabular}
\end{table}

The computation of this section and Section~\ref{sec:cert} confirms the choice at the counts. For $e=63$, $64$ and $65$ the heaviest cube held $8.1\%$, $9.8\%$ and $9.7\%$ of the family, and $3{,}574$, $3{,}074$ and $2{,}393$ of the $4096$ cubes emitted no graph. Running time is driven by search rather than by graph count. These empty cubes account for $85\%$, $70\%$ and $50\%$ of the certification time in Table~\ref{tab:counts}, and the slowest cube for $e=63$ is empty. In the first complete enumeration run, a separate earlier run under different cluster load than the certification run of Table~\ref{tab:counts}, with one cube per task, the counts $e=63$, $64$ and $65$ finished in
$3.0$, $3.5$ and $10.3$ hours of wall-clock time with at most $288$ single-threaded tasks at once.

\section{Certification}\label{sec:cert}

For the certification, we first check the encoding as described below, and then we certify the search in three levels of formality. Table~\ref{tab:trust} presents what each level establishes.

\begin{table}[h]\centering
\caption{The trusted base, component by component. Every computational step specific to this proof is
checked by a program separate from the search: Lean~4, \texttt{drat-trim}, the witness checker, or the
encoding checks of this section. The result depends on neither external theorem:
the certified search independently covers the analytic dispatch, and the lower
bound of $55$ edges is proved in Lean. The certificate and encoding checkers are separate from the search programs but
are not themselves formally verified.}
\label{tab:trust}
\setlength{\tabcolsep}{3pt}
\begin{tabular}{ll}
\toprule
proof component & established by\\
\midrule
reduction; $\alpha\le5$, (L) isomorphism-invariant; $\ee(G)\ge55$ & Lean~4\\
base formulas $F_e$, $e\in[55,65]$ & regenerated (Lemma~\ref{lem:encoding})\\
$e\in[55,59]$ ruled out & analytic $+$ \texttt{drat-trim}\\
$e\in[60,65]$ complete & SMS, completeness-checked re-run\\
augmented formula unsatisfiable, each cell & \texttt{drat-trim}\\
symmetry clauses, each cell & witness check\\
Brouwer, Kang--Pikhurko theorems & external\\
\bottomrule
\end{tabular}
\end{table}

\paragraph{Encoding checks.}
In the computation, the base formulas must match the formulas $F_e$ described in Section~\ref{sec:encoding}. To this end, we design an encoder independent from \textsc{Encode} of Algorithm~\ref{alg:pipeline}. The encoder rebuilds the clause groups (I), (K) and (E) from their definitions. For each of the eleven base encoding files, identified by SHA-256 hashes, we checked that the clause multiset and the variable/clause counts agree with the generated formula. Next, we injected three flaws (a dropped clause, a flipped literal and an added clause), and all of them were detected. In addition, the cube file contains each of the $2^{12}$ truth assignments of the $12$ cube variables exactly once. Each clause of a restricted local-density layer is one of the $455$ at-most-$11$ clauses of a $6$-set and is therefore implied by (L). In every cell with $e\in[60,65]$, the recorded counts satisfy $|L_D|=455\,|D|$ and sum to the size of the augmented formula. The only cell whose files were all retained ($e=63$, cube~$3228$ of the cube file) also passes the checks. All $753$ cells that were run more than once produced byte-identical augmented formulas.

\paragraph{A completeness-checked re-run.}
The first level addresses the basic risk that a subproblem was silently truncated and reported complete.
We re-ran the entire enumeration under a stricter runner that requires, for each monolithic cell and
each of the $3\times4096$ cubes, a four-part completeness certificate: (1)~SMS exited with
status $0$, so it was not killed, timed out, or crashed. (2)~It printed its closing timing line, which
it prints only at a normal end. (3)~Its SAT search ended with result code $20$ (unsatisfiable), meaning
the search tree was exhausted rather than cut off. (4)~The number of graphs SMS reported equals the number of graphs captured from its output. A separate verifier confirms that every cube is present exactly once with its certificate. The re-run reproduces Table~\ref{tab:counts} exactly.

\paragraph{Clausal certificates from proof-logging SMS.}
The second level gives each cell a clausal certificate that an independent program checks. A cell
claims that no graph $G$ in the cell satisfies $\alpha(G)\le5$, $\omega(G)\le5$, $\ee(G)=e$ and (L).
The formula of a cell consists of $F_e$, the unit clauses of its cube (for $e\ge63$) and the layer
$L_D$. Its certificate is built in three steps. First, the proof-logging SMS~\citep{KirchwegerManriqueSzeider} runs on the cell and outputs each symmetry clause it learns,
together with a permutation witness. Second, these clauses are added to the formula of the cell, and
CaDiCaL~\citep{BiereCaDiCaL2024} refutes the resulting augmented formula. Third,
\texttt{drat-trim}~\citep{WetzlerHeuleHunt} checks the refutation, which CaDiCaL writes in the standard
DRAT proof format.

The formula of a cell contains (L) only in restricted form. The full (L) encoding with the at-most-$11$ constraints on all $\binom{26}{6}$ $6$-sets would add about $10^8$ clauses per cell. Therefore, we add the constraint only on the $6$-sets of $D$ (Section~\ref{sec:encoding}), one densest $6$-set per base graph. Every clause of this layer $L_D$ is implied by (L), and the layer is then small (Table~\ref{tab:sizes}).

A refutation shows only that the augmented formula is unsatisfiable. It proves the claim of the cell as the added clauses exclude no canonical graph with the target properties ($\alpha\le5$, $\omega\le5$, $e$ edges and (L)). Supposing some graph $G$ has these properties, they are invariant under isomorphism (the Lean development proves this for $\alpha\le5$ and (L)), so the canonical copy of $G$ has them as well. The canonical copy also lies in one cell, and the corresponding assignment extends to a model of $F_e$ (Lemma~\ref{lem:encoding}). It satisfies $L_D$, because every clause of $L_D$ is implied by (L), and it satisfies every symmetry clause as checked below. The augmented formula of that cell would then be satisfiable, which its refutation rules out.

Each symmetry clause comes with a permutation witness $\sigma$. Let $M$ be the $0/1$ vector of the
$325$ edge variables in the order $x_1,\dots,x_{325}$ of Section~\ref{sec:encoding}, and let
$\sigma(M)$ be this vector for the graph relabeled by $\sigma$. Every canonical graph satisfies the
\emph{lex-leader inequality} $M\le_{\mathrm{lex}}\sigma(M)$, where $\le_{\mathrm{lex}}$ is the
lexicographic order of Section~\ref{sec:prelim}. Therefore, a clause excludes no canonical graph if every assignment falsifying it violates this inequality. Our checker tests this condition. A fast test looks for one position where every such assignment makes $M$ and $\sigma(M)$ first differ, with
$1$ in $M$ and $0$ in $\sigma(M)$. If no such position exists, an exact test considers each way to
satisfy the inequality: $M=\sigma(M)$, or a first difference at a position where $M$ has $0$ and
$\sigma(M)$ has $1$. For both cases, union-find on the equalities and fixed values that the case
forces decides whether the case is consistent with falsifying the clause, and the clause is accepted
when no case is. The checker shares no code with the SMS build that emitted the clauses. It accepts all
$273{,}406{,}587$ symmetry clauses of the $12{,}291$ cells.

We ran this pipeline (Algorithm~\ref{alg:pipeline}, lines 8--10) on all $12{,}291$ cells: the three
monolithic cases and the $3\times4096$ cubes. Every refutation passes \texttt{drat-trim} without
failure. The cells exclude $2{,}873{,}026$ base graphs in total, and the per-cell counts reproduce
Table~\ref{tab:counts}. Table~\ref{tab:counts} also gives the time and the proof size per edge count. In detail, the time cost splits nearly evenly over the four stages of a cell: $29\%$ for enumerating the base graphs, $26\%$ for the
proof-logging SMS run that emits the symmetry clauses, $23\%$ for the CaDiCaL refutations, and $22\%$ for checking them with \texttt{drat-trim}.

The raw DRAT proofs total $5.5$\,TB, or $2.0$\,TB after \texttt{drat-trim} removes the steps unused by the refutation. Therefore, for every cell, we store a complete record of the run instead of the proof itself (times, SHA-256 hashes of all artifacts, and solver statistics). The solvers are deterministic, so any cell's certificate regenerates from the recorded inputs, identified by their hashes, and re-checks against its recorded hash.

\paragraph{The analytic edge counts, certified.}
The five near-extremal counts, $e\in[55,59]$, are settled analytically in Section~\ref{sec:analytic}, and they were also
put through the certificate pipeline. For $e\in[55,58]$ the family of graphs with $\alpha\le5$, $\omega\le5$ and $e$ edges is empty, so the cell has no base graphs. When $D$ is empty, its formula is directly refuted. For $e=59$, SMS emits seven graphs, but none of them satisfies (L). These seven graphs are copies of exactly two isomorphism classes, the complements of the graphs of Theorem~\ref{thm:kp} for the tuple $(5,5,5,5,5)$ with $q=1$ and $q=2$, which is precisely the family that the argument of Section~\ref{sec:analytic} predicts for $\alpha\le5$, $\omega\le5$ and $59$ edges, so the enumeration cross-checks Theorem~\ref{thm:kp}, as the empty families for $e\in[55,58]$ cross-check Theorem~\ref{thm:brouwer}. All five augmented refutations pass \texttt{drat-trim}, and all $97{,}351$ of their symmetry clauses are validated against their permutation witnesses. With them the certification comprises $12{,}296$ cells and $273{,}503{,}938$ witness-checked clauses in total. In the end, every edge count in $[55,65]$ carries a \texttt{drat-trim}-checked refutation whose symmetry clauses are witness-checked.

\paragraph{A Lean 4 formalization of the reduction, and its contract.}

The third level formalizes the reduction of Section~\ref{sec:reduction} in Lean~4 with Mathlib
\citep{Mathlib}. A formal theorem is only as good as the fit between its statement and the claim, so
we checked its \emph{contract}: the definitions the statements use and the statements themselves are the same as they appear in the development. (See Listing~\ref{lst:contract} in Appendix~\ref{app:lean}). A $5$-coloring of $K_{26}$ is a map \lstinline{c : Sym2 V → Fin 5} on a type \lstinline{V} with $26$ vertices.
The values on the diagonal pairs \lstinline{s(v, v)} are never consulted. Here \lstinline{Sym2 V} is Mathlib's type of unordered pairs \lstinline{s(v, w)} of vertices of \lstinline{V}, diagonal pairs
included, and \lstinline{Fin 5} is the type $\{0,\dots,4\}$, so Lean numbers the colors $0,\dots,4$ rather than $1,\dots,5$. The color class \lstinline{k} is
the simple graph \lstinline{classGraph c k} of the \lstinline{k}-colored pairs. \lstinline{Balanced},
\lstinline{NoIndep6} and \lstinline{PropertyL} literally state the definitions of Sections~\ref{sec:intro}
and~\ref{sec:reduction}, except that \lstinline{PropertyL} counts ordered pairs and is therefore bounded by $22=2\cdot11$.

The contract reads against the paper as follows.
\begin{itemize}
\item \emph{Notation.} \lstinline{G.edgeFinset.card} is the number of edges of \lstinline{G},
  \lstinline{S.offDiag} is the set of ordered pairs of distinct elements of \lstinline{S}, and
  \lstinline{f : G ≃g G'} is a graph isomorphism. The argument \lstinline{[DecidableRel G.Adj]} only
  makes adjacency decidable, so that the edges can be counted.
\item \lstinline{noIndep6_classGraph} and \lstinline{propertyL_classGraph} are properties (A) and (L)
  of Section~\ref{sec:reduction}, and \lstinline{exists_class_le_65} is the averaging bound of
  Lemma~\ref{lem:band}.
\item \lstinline{card_edgeFinset_ge_55} is the lower bound of Lemma~\ref{lem:band}. Lean proves it
  from the formalization of Tur\'an's theorem in Mathlib applied to the complement, which gives the
  same bound as the Caro--Wei inequality.
\item \lstinline{no_balanced_5_colouring_of_mainLemma} concludes \lstinline{¬ Balanced c} for every
  coloring of a $26$-element type, which is Theorem~\ref{thm:main}. Its only hypothesis,
  \lstinline{step_sms_full}, is a statement about graphs alone. It is the Main Lemma, which the certified search establishes. The search also imposes $\omega\le5$, but (L) implies it because a $K_6$ has $15$ edges.
\item The variant \lstinline{no_balanced_5_colouring} of \texttt{Basic.lean}, not shown in
  Listing~\ref{lst:contract}, predates \lstinline{card_edgeFinset_ge_55} and has four hypotheses in place of the Main Lemma: the lower bound, which is now proved, and one per case of Fig.~\ref{fig:roadmap}, namely Brouwer,
  Kang--Pikhurko, and the search for $e\in[60,65]$. The lemmas \lstinline{noIndep6_iso} and \lstinline{propertyL_iso} give the
  isomorphism invariance that the certification argument above relies on.
\item \emph{Checks.} With Lean~4.28.0 and Mathlib, the development builds without errors, and \lstinline{#print axioms} reports only \lstinline{propext}, \lstinline{Classical.choice} and \lstinline{Quot.sound}, the standard axioms of Lean's logic, for every theorem in the listing. Therefore, the development adds no axioms, and its one remaining hypothesis is the Main Lemma, which the certified search establishes outside Lean. Lean prints the conclusion of the main theorem as \lstinline{¬ErdosGyarfasR5.Balanced c}, and it can confirm that it refers to the definition above and not to Mathlib's unrelated \lstinline{Balanced}, a property of sets in normed spaces.
\end{itemize}

\paragraph{Artifact and reproducibility.}
The anonymous artifact is available~\citep{Artifact}. (Note that the link may be broken when copied with multiple lines, so please copy the links if clicks fail.)
The encodings, the cube partition, the property (L) test, the completeness-checked runner and
verifier, the certificate pipeline, the per-cell run records with the SHA-256 hashes of every
artifact, complete certificate chains for representative cells,
the regeneration and re-verification tooling, and the Lean~4 development accompany the paper as an anonymized code-and-data archive in the supplementary material. The full
witness and certificate corpus exceeds the supplementary size limit, so its archives are identified by
SHA-256 hashes in the supplement and regenerate deterministically from the recorded inputs. The archive
also records the checksums of the binaries and the hashes of every input formula and cube file, and the
Lean~4 development builds against a fixed Mathlib version. All runs used cluster nodes with two $24$-core AMD EPYC~7402 CPUs ($96$ hardware threads) and
$1$\,TB of RAM. The search and its certification ran under Ubuntu~18.04 (Linux~$4.15$), up to $288$ single-threaded
tasks concurrently, and the direct-solving tests of Section~\ref{sec:intro} under Debian~13
(Linux~$6.12$), kissat on one core and CP-SAT with eight threads. During development two settings were explored, the cube variables and their number $k$ (Section~\ref{sec:cubevars} reports all candidates and the selection criterion). The remaining settings (the per-tool time limits, the solver versions, and the minimality-check cutoff of SMS, left at its default of $200{,}000$ steps in the enumeration and switched off (\texttt{-{}-cutoff~0}) in the proof-logging runs that emit the symmetry clauses) were fixed at the beginning. All of these affect only the cost of the search, because the validity of the result rests on the certificates as we discussed, which are checked independently of how the search was configured.

\section{Discussion}\label{sec:discussion}

\paragraph{Validation.}
Three checks support the computation. First, the same reduction and enumeration, without the
certificate stages, reproduce the \emph{known} case $r=4$: over the band $\ee(G_0)\in[28,34]$, the search emits
every isomorphism class on $17$ vertices with $\alpha\le4$ and $\omega\le4$, and
none satisfies the $r=4$ local-density bound that every $5$-set spans at most $7$ edges, reproducing
the nonexistence of a balanced $4$-coloring of $K_{17}$. Second, the direct encoding of a balanced coloring as one propositional formula
(Section~\ref{sec:intro}) passes positive and
negative controls: it finds balanced colorings of $K_9$ and $K_{16}$ and reports
nonexistence for $K_{10}$, and for $K_{17}$ through the reduction used in the first check. Third, an independent simulated-annealing search (a randomized local search) found a
genuinely balanced $5$-coloring of $K_{25}$ that is \emph{not} affine: its color classes have
$56$, $59$, $60$, $61$ and $64$ edges, not the affine $50$, $50$, $50$, $50$ and $100$. Consistent with
Theorem~\ref{thm:main}, it does not extend to $K_{26}$.

\paragraph{The method.}
The proof instantiates a reusable template for extremal existence questions: \emph{reduce} the
existence question to a property of one sparse substructure with parameters in a narrow
range; \emph{dispatch} the extreme parameter values
analytically with stability theorems, which apply precisely at near-extremal values; and
\emph{enumerate} the remaining mid-range values exhaustively with SAT modulo symmetries. The
reduction grew from one observation: once one color class is fixed, deciding whether the remaining
edges can be colored to complete a balanced coloring is essentially instant already at $r=4$, while the single balanced-coloring formula is out of reach. The hard part
is ranging over all candidate classes, an isomorph-free generation problem.

\paragraph{Related work.}
The Boolean Pythagorean triples problem \citep{HeuleKullmannMarek} and Schur number five
\citep{Heule2018} were settled by massive cube-and-conquer refutations with DRAT certificates, and
in both the solver carries essentially the entire case distinction, with no mathematical reduction
in front of it. Other major
certificate-backed SAT computations include the resolution of Keller's conjecture
\citep{BrakensiekHeuleMackeyNarvaez}. More recently, the empty-hexagon number
\citep{SubercaseauxEtAl2024} combines SAT certification with a reduction formalized in an interactive theorem prover. SAT modulo
symmetries \citep{KirchwegerSzeider} brought isomorph-free generation into the solver, with
cube-and-conquer strategies tailored to it \citep{KirchwegerXiaPeitlSzeider} and a formally verified
extension that checks the symmetry reasoning in Lean \citep{KirchwegerManriqueSzeider}. A side condition like property (L), whose violation is witnessed by a single $6$-set spanning at least $12$ edges, can instead be
enforced inside the solver by co-certificate learning \citep{KirchwegerPeitlSzeider2023}, which turns
each such witness into a learned clause, but
generating the base family and imposing a sound restricted (L) is simpler at our scale. Pairing an analytic reduction
with a certified search is an established pattern: the nonexistence of a projective
plane of order $10$ was settled this way by \citet{LamThielSwiercz1989} and re-established with a
SAT certificate \citep{BrightEtAl2021}, and isomorph-free generation of the sparse side is standard
since \citet{McKay1998}. What is specific here is that both halves are checked: the reduction in Lean,
and every cell by an independent refutation.

\section{Conclusion}\label{sec:conclusion}

We settled the Erd\H{o}s--Gy\'arf\'as balanced-coloring conjecture for $r=5$, the first new case since 1999: $K_{25}$ is the largest complete graph with a balanced $5$-coloring. The proof reduces the conjecture to an extremal lemma about one color class, dispatches the near-extremal counts analytically, and settles the rest by a certified enumeration, with every computational step checked by a program separate from the search. The next case, $r=6$, is of independent interest: $6$ is not a prime power, so no affine plane of order $6$ exists, and it is open whether even $K_{36}$ has a balanced $6$-coloring.

\begin{credits}
\subsubsection{Acknowledgments}
The authors acknowledge the support by the European Union's Horizon 2020 research and innovation programme
under the Maria Skłodowska-Curie grant agreement
No.~101034440\marginpar{\worldflag[width=18pt]{EU}}, and the support from the Austrian Science Fund
  (FWF), project 10.55776/P36688, 10.55776/P36420 and 10.55776/COE12.

\subsubsection{Use of AI assistants}
We used generative AI tools to implement the Lean formalizations. We also used AI tools to draft and edit text and figures. We reviewed all AI-assisted work. Code and proofs are checked by tests and the Lean kernel. We take responsibility for the final content of this paper, including text, claims and artifacts produced with the aid of generative AI.
\end{credits}

\bibliographystyle{plainurl}
\bibliography{egr5}

\begin{thebibliography}{10}

\bibitem{Artifact}
Anonymous.
\newblock Anonymized research artifact (code, proofs and data).
\newblock Zenodo, 2026.
\newblock
  \href{https://zenodo.org/records/21246738?preview=1&token=eyJhbGciOiJIUzUxMiJ9.eyJpZCI6IjVlMTBlOWNhLWVjYzctNDAyNS05MTU0LTlmYjIzODZlZDE5OCIsImRhdGEiOnt9LCJyYW5kb20iOiIzMzg2Njg2YmI5NGVlNDM4NzEzODliNGMyZDUxYTBhZiJ9.R1z8-3vEo88uwNzBH0D32tfbd0fnsHlE9HW7e8L1Tr-2qJG2Z86mZs9oxg2NiNl-7CNijSoXkPhanKMrdwJ8eQ}{Access
  link for reviewers}.

\bibitem{BiereCaDiCaL2024}
Armin Biere, Tobias Faller, Katalin Fazekas, Mathias Fleury, Nils Froleyks, and
  Florian Pollitt.
\newblock {CaDiCaL} 2.0.
\newblock In {\em Computer Aided Verification (CAV)}, volume 14681 of {\em
  Lecture Notes in Computer Science}, pages 133--152. Springer, 2024.

\bibitem{BiereKissat2020}
Armin Biere, Katalin Fazekas, Mathias Fleury, and Maximilian Heisinger.
\newblock {CaDiCaL}, {K}issat, {P}aracooba, {P}lingeling and {T}reengeling
  entering the {SAT} {C}ompetition 2020.
\newblock In Tom{\'a}{\v s} Balyo, Nils Froleyks, Marijn Heule, Markus Iser,
  Matti J{\"a}rvisalo, and Martin Suda, editors, {\em Proceedings of SAT
  Competition 2020: Solver and Benchmark Descriptions}, volume B-2020-1 of {\em
  Department of Computer Science Report Series B}, pages 50--53. University of
  Helsinki, 2020.

\bibitem{BrakensiekHeuleMackeyNarvaez}
Joshua Brakensiek, Marijn Heule, John Mackey, and David~E. Narv{\'a}ez.
\newblock The resolution of {K}eller's conjecture.
\newblock {\em Journal of Automated Reasoning}, 66(3):277--300, 2022.

\bibitem{BrightEtAl2021}
Curtis Bright, Kevin K.~H. Cheung, Brett Stevens, Ilias~S. Kotsireas, and Vijay
  Ganesh.
\newblock A {SAT}-based resolution of {L}am's problem.
\newblock In {\em Proceedings of the 35th AAAI Conference on Artificial
  Intelligence (AAAI)}, pages 3669--3676. AAAI Press, 2021.

\bibitem{Brooks1941}
R.~L. Brooks.
\newblock On colouring the nodes of a network.
\newblock {\em Mathematical Proceedings of the Cambridge Philosophical
  Society}, 37(2):194--197, 1941.

\bibitem{Brouwer1981}
Andries~E. Brouwer.
\newblock Some lotto numbers from an extension of {T}ur{\'a}n's theorem.
\newblock Technical Report ZW 152/81, Mathematisch Centrum, Amsterdam, 1981.
\newblock Zbl 0443.05032.

\bibitem{Caro1979}
Yair Caro.
\newblock New results on the independence number.
\newblock Technical report, Tel-Aviv University, 1979.

\bibitem{ChungLiu1978}
K.~M. Chung and C.~L. Liu.
\newblock A generalization of {R}amsey theory for graphs.
\newblock {\em Discrete Mathematics}, 21(2):117--127, 1978.
\newblock \href {https://doi.org/10.1016/0012-365X(78)90084-5}
  {\path{doi:10.1016/0012-365X(78)90084-5}}.

\bibitem{Crawford1996}
James~M. Crawford, Matthew~L. Ginsberg, Eugene~M. Luks, and Amitabha Roy.
\newblock Symmetry-breaking predicates for search problems.
\newblock In {\em Proceedings of the 5th International Conference on Principles
  of Knowledge Representation and Reasoning (KR)}, pages 148--159. Morgan
  Kaufmann, 1996.

\bibitem{EG1999}
Paul Erd{\H{o}}s and Andr{\'a}s Gy{\'a}rf{\'a}s.
\newblock Split and balanced colorings of complete graphs.
\newblock {\em Discrete Mathematics}, 200(1--3):79--86, 1999.

\bibitem{GreenwoodGleason1955}
R.~E. Greenwood and A.~M. Gleason.
\newblock Combinatorial relations and chromatic graphs.
\newblock {\em Canadian Journal of Mathematics}, 7:1--7, 1955.

\bibitem{HeuleKullmannWieringaBiere}
Marijn Heule, Oliver Kullmann, Siert Wieringa, and Armin Biere.
\newblock Cube and conquer: Guiding {CDCL} {SAT} solvers by lookaheads.
\newblock In {\em Hardware and Software: Verification and Testing (HVC)},
  volume 7261 of {\em Lecture Notes in Computer Science}, pages 50--65.
  Springer, 2011.

\bibitem{Heule2018}
Marijn J.~H. Heule.
\newblock Schur number five.
\newblock In {\em Proceedings of the 32nd AAAI Conference on Artificial
  Intelligence (AAAI)}, pages 6598--6606. AAAI Press, 2018.

\bibitem{HeuleKullmannMarek}
Marijn J.~H. Heule, Oliver Kullmann, and Victor~W. Marek.
\newblock Solving and verifying the {B}oolean {P}ythagorean triples problem via
  cube-and-conquer.
\newblock In {\em Theory and Applications of Satisfiability Testing (SAT)},
  volume 9710 of {\em Lecture Notes in Computer Science}, pages 228--245.
  Springer, 2016.

\bibitem{KangPikhurko2005}
Mihyun Kang and Oleg Pikhurko.
\newblock Maximum {$K_{r+1}$}-free graphs which are not {$r$}-partite.
\newblock {\em Matematychni Studii}, 24(1):12--20, 2005.

\bibitem{KirchwegerManriqueSzeider}
Markus Kirchweger, Pablo Manrique, and Stefan Szeider.
\newblock Formally verified graph generation with {SAT} modulo symmetries and
  {L}ean.
\newblock In Armin Biere, Carsten Lutz, and Sara Negri, editors, {\em Automated
  Reasoning (IJCAR)}, volume 16688 of {\em Lecture Notes in Computer Science},
  pages 117--135. Springer, 2026.
\newblock \href {https://doi.org/10.1007/978-3-032-32589-1_8}
  {\path{doi:10.1007/978-3-032-32589-1_8}}.

\bibitem{KirchwegerPeitlSzeider2023}
Markus Kirchweger, Tom{\'a}{\v{s}} Peitl, and Stefan Szeider.
\newblock Co-certificate learning with {SAT} modulo symmetries.
\newblock In {\em Proceedings of the 32nd International Joint Conference on
  Artificial Intelligence (IJCAI)}, pages 1944--1953, 2023.

\bibitem{KirchwegerXiaPeitlSzeider}
Markus Kirchweger, Tom{\'a}{\v{s}} Peitl, Stefan Szeider, and Hai Xia.
\newblock Smart cubing for graph search: {A} comparative study.
\newblock In {\em Principles and Practice of Constraint Programming (CP)},
  volume 379 of {\em LIPIcs}, pages 33:1--33:19. Schloss Dagstuhl --
  Leibniz-Zentrum f{\"u}r Informatik, 2026.

\bibitem{KirchwegerSzeider}
Markus Kirchweger and Stefan Szeider.
\newblock {SAT} modulo symmetries for graph generation and enumeration.
\newblock {\em ACM Transactions on Computational Logic}, 25(3):1--30, 2024.

\bibitem{LamThielSwiercz1989}
C.~W.~H. Lam, L.~Thiel, and S.~Swiercz.
\newblock The non-existence of finite projective planes of order 10.
\newblock {\em Canadian Journal of Mathematics}, 41(6):1117--1123, 1989.
\newblock \href {https://doi.org/10.4153/CJM-1989-049-4}
  {\path{doi:10.4153/CJM-1989-049-4}}.

\bibitem{McKay1998}
Brendan~D. McKay.
\newblock Isomorph-free exhaustive generation.
\newblock {\em Journal of Algorithms}, 26(2):306--324, 1998.

\bibitem{PerronDidierGay2023}
Laurent Perron, Fr{\'e}d{\'e}ric Didier, and Steven Gay.
\newblock The {CP-SAT-LP} solver.
\newblock In {\em Proceedings of the 29th International Conference on
  Principles and Practice of Constraint Programming (CP)}, volume 280 of {\em
  LIPIcs}, pages 3:1--3:2. Schloss Dagstuhl -- Leibniz-Zentrum f{\"u}r
  Informatik, 2023.

\bibitem{Sinz2005}
Carsten Sinz.
\newblock Towards an optimal {CNF} encoding of boolean cardinality constraints.
\newblock In {\em Principles and Practice of Constraint Programming (CP)},
  volume 3709 of {\em Lecture Notes in Computer Science}, pages 827--831.
  Springer, 2005.

\bibitem{SubercaseauxEtAl2024}
Bernardo Subercaseaux, Wojciech Nawrocki, James Gallicchio, Cayden~R. Codel,
  Mario Carneiro, and Marijn J.~H. Heule.
\newblock Formal verification of the empty hexagon number.
\newblock In {\em Interactive Theorem Proving (ITP)}, volume 309 of {\em
  LIPIcs}, pages 35:1--35:19. Schloss Dagstuhl -- Leibniz-Zentrum f{\"u}r
  Informatik, 2024.

\bibitem{Mathlib}
{The mathlib Community}.
\newblock The {L}ean mathematical library.
\newblock In {\em Proceedings of the 9th ACM SIGPLAN International Conference
  on Certified Programs and Proofs (CPP)}, pages 367--381. ACM, 2020.

\bibitem{Turan1941}
P{\'a}l Tur{\'a}n.
\newblock Eine {Extremalaufgabe} aus der {Graphentheorie}.
\newblock {\em Matematikai {\'e}s Fizikai Lapok}, 48:436--452, 1941.
\newblock In Hungarian, with a German summary. Zbl 0026.26903.

\bibitem{Wei1981}
Victor~K. Wei.
\newblock A lower bound on the stability number of a simple graph.
\newblock Technical Report TM 81-11217-9, Bell Laboratories, 1981.

\bibitem{WetzlerHeuleHunt}
Nathan Wetzler, Marijn J.~H. Heule, and Warren~A. Hunt~Jr.
\newblock {DRAT-trim}: Efficient checking and trimming using expressive clausal
  proofs.
\newblock In {\em Theory and Applications of Satisfiability Testing (SAT)},
  volume 8561 of {\em Lecture Notes in Computer Science}, pages 422--429.
  Springer, 2014.

\end{thebibliography}
\newpage

\appendix
\renewcommand{\thelstlisting}{\arabic{lstlisting}}
\section{The Lean Contract}\label{app:lean}

Listing~\ref{lst:contract} shows the definitions and theorem statements of the Lean development
discussed in Section~\ref{sec:cert}. Lean notation used in it: \lstinline{SimpleGraph V} is a simple
graph on the vertex type \lstinline{V}, with adjacency relation \lstinline{G.Adj}; \lstinline{Finset V}
is a finite set of vertices, \lstinline{S.card} its size, and \lstinline{S.filter p} the elements of
\lstinline{S} satisfying \lstinline{p}; \lstinline{p.1} and \lstinline{p.2} are the components of a pair;
\lstinline{Fintype.card V} is the number of elements of \lstinline{V}; \lstinline{Prop} is the type of
propositions; \lstinline{noncomputable} marks a definition that Lean does not compile to executable code;
and the omitted fields \lstinline{symm} and \lstinline{loopless} of \lstinline{classGraph} prove that
adjacency is symmetric and irreflexive.

\begin{lstlisting}[basicstyle=\ttfamily\scriptsize, caption={The contract of the Lean development: definitions and theorem statements as they appear in the source (\texttt{Basic.lean}, \texttt{EdgeBound.lean}, \texttt{IsoInvariance.lean}), proofs omitted. The vertex types \lstinline{V} and \lstinline{W}, the graphs \lstinline{G} and \lstinline{G'} of the last two lemmas and, where needed, decidable equality and finiteness are implicit section variables.}, label={lst:contract}]
def classGraph (c : Sym2 V → Fin 5) (k : Fin 5) : SimpleGraph V where
  Adj v w := v ≠ w ∧ c s(v, w) = k     -- symm, loopless: proofs omitted
def Balanced (c : Sym2 V → Fin 5) : Prop :=
  ∀ S : Finset V, S.card = 6 → ∀ k : Fin 5, ∃ v ∈ S, ∃ w ∈ S, v ≠ w ∧ c s(v, w) = k
def NoIndep6 (G : SimpleGraph V) : Prop :=
  ∀ S : Finset V, S.card = 6 → ∃ v ∈ S, ∃ w ∈ S, G.Adj v w
noncomputable def edgesOrdWithin (G : SimpleGraph V) (S : Finset V) : ℕ :=
  (S.offDiag.filter (fun p => G.Adj p.1 p.2)).card
def PropertyL (G : SimpleGraph V) : Prop :=
  ∀ S : Finset V, S.card = 6 → edgesOrdWithin G S ≤ 22

theorem noIndep6_classGraph (c : Sym2 V → Fin 5) (hc : Balanced c) (k : Fin 5) :
    NoIndep6 (classGraph c k)
theorem propertyL_classGraph (c : Sym2 V → Fin 5) (hc : Balanced c) (k : Fin 5) :
    PropertyL (classGraph c k)
theorem exists_class_le_65 (c : Sym2 V → Fin 5) (hcard : Fintype.card V = 26) :
    ∃ k : Fin 5, (classGraph c k).edgeFinset.card ≤ 65
theorem card_edgeFinset_ge_55 (hcard : Fintype.card V = 26) (G : SimpleGraph V)
    [DecidableRel G.Adj] (hA : NoIndep6 G) : 55 ≤ G.edgeFinset.card
theorem no_balanced_5_colouring_of_mainLemma
    (step_sms_full : ∀ (G : SimpleGraph V) [DecidableRel G.Adj],
      NoIndep6 G → PropertyL G → ¬ (55 ≤ G.edgeFinset.card ∧ G.edgeFinset.card ≤ 65))
    (hcard : Fintype.card V = 26) (c : Sym2 V → Fin 5) :
    ¬ Balanced c
theorem noIndep6_iso (f : G ≃g G') : NoIndep6 G ↔ NoIndep6 G'
theorem propertyL_iso (f : G ≃g G') : PropertyL G ↔ PropertyL G'
\end{lstlisting}
\end{document}